\def\Rpp{\R_{++}}
\def\Sc{\mathbb{S}}
\def\Sn{\Sc^n}
\def\Snp{\Sc_+^n}
\def\Snpp{\Sc_{++}^n}
\def\R{\mathbb{R}}
\def\Rd{\mathbb{R}^d}
\def\Rdn{\R^{d\times n}}
\def\Rnd{\R^{n\times d}}
\def\Rmn{\R^{m\times n}}
\def\Rd{\mathbb{R}^d}
\def\Rnod{\R^{(n-1)\times d}}
\def\Rn{\mathbb{R}^n}
\DeclareMathOperator{\argmin}{{argmin}}
\DeclareMathOperator{\diag}{{diag}}
\DeclareMathOperator{\Diag}{{Diag}}
\newcommand{\EDM}{\textbf{EDM}\,}
\newcommand{\EDMp}{\textbf{EDM}}
\newcommand{\lngm}{\textbf{lngm}\,}
\newcommand{\lngmp}{\textbf{lngm}}
\newcommand{\KK}{{\mathcal K} }
\newcommand{\cK}{{\mathcal K} }
\newcommand{\cS}{{\mathcal S} }
\newcommand{\textdef}[1]{\textit{#1}\index{#1}}
\newcommand{\cM}{{\mathcal M}}
\newcommand{\cD}{{\mathcal D} }
\newcommand{\SC}{\mathcal{S}^n_C}
\newcommand{\SH}{{\mathcal S}^n_H}
\newcommand{\cO}{{\mathcal O} }
\newcommand{\Rdd}{\R^{d \times d}}
\newcommand{\Rnn}{\R^{n \times n}}
\newcommand{\cL}{{\mathcal L} }
\newcommand{\cT}{{\mathcal T} }
\newcommand{\tred}[1]{\textcolor{black}{#1}}
\numberwithin{figure}{section}
\numberwithin{equation}{section}
\newtheorem{theorem}{Theorem}[section]
\newtheorem{proposition}[theorem]{Proposition}
\newtheorem{example}[theorem]{Example}
\newtheorem{cor}[theorem]{Corollary}
\newtheorem{corollary}[theorem]{Corollary}
\newtheorem{remark}[theorem]{Remark}
\newtheorem{problem}[theorem]{Problem}
\newtheorem{lemma}[theorem]{Lemma}
\crefname{theorem}{Theorem}{Theorems}
\Crefname{theorem}{Theorem}{Theorems}
\crefname{thm}{Theorem}{Theorems}
\Crefname{thm}{Theorem}{Theorems}
\crefname{problem}{Problem}{Problems}
\Crefname{problem}{Problem}{Problems}
\Crefname{assump}{Assumption}{Assumptions}
\crefname{assump}{Assumption}{Assumptions}
\crefname{conjecture}{Conjecture}{Conjectures}
\Crefname{conjecture}{Conjecture}{Conjectures}
\crefname{proposition}{Proposition}{Propositions}
\Crefname{proposition}{Proposition}{Propositions}
\crefname{prop}{Proposition}{Propositions}
\Crefname{prop}{Proposition}{Propositions}
\crefname{cor}{Corollary}{Corollaries}
\Crefname{cor}{Corollary}{Corollaries}
\crefname{lem}{Lemma}{Lemmas}
\Crefname{lem}{Lemma}{Lemmas}
\crefname{defn}{definition}{Definitions}
\Crefname{defn}{Definition}{Definitions}
\crefname{conj}{Conjecture}{Conjectures}
\Crefname{conj}{Conjecture}{Conjectures}
\crefname{remark}{Remark}{Remarks}
\Crefname{remark}{Remark}{Remarks}
\crefname{rmk}{Remark}{Remarks}
\Crefname{rmk}{Remark}{Remarks}
\crefname{example}{Example}{Examples}
\Crefname{example}{Example}{Examples}
\Crefname{case}{Case}{Cases}
\Crefname{Case}{Case}{Cases}
\crefname{align}{}{}
\Crefname{align}{}{}
\crefname{equation}{}{}
\Crefname{equation}{}{}
\DeclareMathOperator{\nul}{null}
\DeclareMathOperator{\range}{range}
\DeclareMathOperator{\kvec}{{vec}}
\DeclareMathOperator{\kMat}{{Mat}}
\DeclareMathOperator{\Mat}{{Mat}}
\DeclareMathOperator{\LTriag}{{\cL Triag}}
\DeclareMathOperator{\svec}{{svec}}
\DeclareMathOperator{\spanl}{{span}}
\journal{Linear Algebra and its Applications}
\begin{document}

\begin{frontmatter}

%% Title, authors and addresses

%% use the tnoteref command within \title for footnotes;
%% use the tnotetext command for theassociated footnote;
%% use the fnref command within \author or \affiliation for footnotes;
%% use the fntext command for theassociated footnote;
%% use the corref command within \author for corresponding author footnotes;
%% use the cortext command for theassociated footnote;
%% use the ead command for the email address,
%% and the form \ead[url] for the home page:
%% \title{Title\tnoteref{label1}}
%% \tnotetext[label1]{}
%% \author{Name\corref{cor1}\fnref{label2}}
%% \ead{email address}
%% \ead[url]{home page}
%% \fntext[label2]{}
%% \cortext[cor1]{}
%% \affiliation{organization={},
%%             addressline={},
%%             city={},
%%             postcode={},
%%             state={},
%%             country={}}
%% \fntext[label3]{}

\title{On the local and global minimizers of the smooth stress function
in Euclidean distance matrix problems\footnote{Date: Aug. 13, 2024;
Revision: July 19, 2025}}

%% use optional labels to link authors explicitly to addresses:
%% \author[label1,label2]{}
%% \affiliation[label1]{organization={},
%%             addressline={},
%%             city={},
%%             postcode={},
%%             state={},
%%             country={}}
%%
%% \affiliation[label2]{organization={},
%%             addressline={},
%%             city={},
%%             postcode={},
%%             state={},
%%             country={}}
%%

%% !!!!!! command \affiliation does not work with Editorial Manager / Elsevier, one must use \address instead... !!!!! 

\author[af0,af1,af2]{Mengmeng Song} \fnref{fn0}
\fntext[fn0]{The work of this author was partially supported by the Academic Excellence Foundation of BUAA for PhD Students, and international joint doctoral education fund of Beihang University.}%% Author name
            
\author[af3]{Douglas S. Gon\c{c}alves\fnref{fn1}\corref{cr1}}
\fntext[fn1]{The work of this author was supported in part by CNPq (Grant 305213/2021-0) and FAPESC (Grant 2024TR002238).}

\cortext[cr1]{Corresponding author.}
\ead{douglas.goncalves@ufsc.br}
            
\author[af2]{Woosuk L. Jung}

\author[af4]{Carlile Lavor\fnref{fn2}}
\fntext[fn2]{The work of this author is supported in part by FAPESP (Grant 2023/08706-1) and CNPq (Grants 305227/2022-0 and 404616/2024-0).}
            
\author[af5]{Antonio Mucherino\fnref{fn3}}
\fntext[fn3]{The work of this author was partially supported by the ANR project EVARISTE (ANR-24-CE23-1621).}
            
\author[af2]{Henry Wolkowicz\fnref{fn4}}
\fntext[fn4]{Research supported by The Natural Sciences and Engineering Research Council of Canada.}

%% Author affiliations
\address[af0]{National Frontiers Science Center for Industrial Intelligence and Systems Optimization, Northeastern University, Shenyang, P.R. China}
\address[af1]{School of Mathematical Sciences, Beihang University, Beijing, 100191, P.R. China}
\address[af2]{Department of Combinatorics and Optimization, Faculty of Mathematics, University of Waterloo, N2L 3G1, Waterloo, Canada}
\address[af3]{Department of Mathematics, Universidade Federal de Santa Catarina, 88040-900, Florianop\'{o}lis, Brazil}
\address[af4]{Department of Applied Mathematics (IMECC), University of Campinas, 13083-859, Campinas, Brazil}
\address[af5]{IRISA, Universit\'{e} de Rennes, F-35042, Rennes, France}
            
%% Abstract
\begin{abstract}
We consider the nonconvex minimization problem, with quartic objective function, that arises in the exact recovery of a configuration matrix
$P\in \R^{nd}$ of $n$ points when a Euclidean distance matrix, \EDMp, is given with embedding dimension $d$. 
It is an open question in the literature whether there are
conditions such that the
minimization problem admits a local nonglobal minimizer, \lngmp.
We prove that all second-order stationary points are global minimizers whenever $n \leq d + 1$. 
{And, for $d=1$ and $n\geq 7>d+1$,
we present an example where we can analytically exhibit a local nonglobal minimizer. 
For more general cases,} we numerically find a second-order stationary point and then prove that there indeed exists a nearby
\lngm for the quartic nonconvex minimization problem.
Thus, we answer the previously open question about
their existence in the affirmative.
Our approach to finding the \lngm is novel in that we first exploit the
translation and rotation invariance to remove the singularities of
the Hessian, and reduce the size of the problem
from $nd$ variables in $P$ to $(n-1)d - d(d-1)/2$ variables.
This allows for stabilizing Newton's method, and for
finding examples that satisfy the strict second 
order sufficient optimality conditions.

The motivation for being able to find global minima is to
obtain \emph{exact recovery} of the configuration matrix,
even in the cases where the data is noisy and/or incomplete,
without resorting to approximating solutions
from convex (semidefinite programming) relaxations.
In the process of our work we present new insights into when \lngmp s 
of the smooth stress function do and do not exist.
\end{abstract}

%%%Graphical abstract
%\begin{graphicalabstract}
%%\includegraphics{grabs}
%\end{graphicalabstract}
%
%%%Research highlights
%\begin{highlights}
%\item Research highlight 1
%\item Research highlight 2
%\end{highlights}

%% Keywords
\begin{keyword}
distance geometry \sep Euclidean distance matrices \sep Gram matrix
\sep local nonglobal minima \sep Kantorovich Theorem \sep exact
recovery 
%% PACS codes here, in the form: \PACS code \sep code
%% MSC codes here, in the form: \MSC code \sep code
%% or \MSC[2008] code \sep code (2000 is the default)
\MSC 51K05 \sep  90C26 \sep 65K10
\end{keyword}

\end{frontmatter}

%% Add \usepackage{lineno} before \begin{document} and uncomment 
%% following line to enable line numbers

%\linenumbers
\tableofcontents
%\linelabel{line:toc}
\label{toc:toc}
%\listoftables
\listoffigures
%% main text
%%

\section{Introduction}

\EDM (completion) problems have long been studied in
the scientific literature, see e.g.,~the
surveys, book collections, and some recent papers~\cite{KrislockWolk:10,MR3246296,MR3059590,alfm18,all19,bsl20}.
It is well known that one can obtain the Gram matrix $\bar G$ from a given \EDM $\bar D$.
Then, a configuration matrix $\bar P$ of
points $\bar p_i \in\Rd$ such that $\bar D_{ij}=\|\bar p_i-\bar p_j\|^2,
i,j = 1,\ldots,n$, can be obtained from a full rank factorization 
\index{configuration matrix, $\bar P$}
\index{$\bar P$, configuration matrix}
\[
\bar G = \bar P\bar P^T, \quad \bar P^T = 
          \begin{bmatrix}\bar p_1,\ldots, \bar p_n  \end{bmatrix}\in
               \R^{d\times n}.
\]

In this paper, we consider the question of exact recovery from 
the unconstrained minimization problem
\begin{equation}\label{eq:s2P}
\min_{P \in \Rnd} \quad \| \KK(PP^T) - \bar{D} \|_F^2,
\end{equation}
where $\KK: \Sn
\to \Sn$ is the Lindenstrauss operator on symmetric matrix space:
\index{$\diag(G)$, diagonal of $G$}
\index{diagonal of $G$, $\diag(G)$}
\index{$e$, vector of ones}
\index{vector of ones, $e$}
\label{page:vones}
%\linelabel{line:vones}
\[
   \KK(G) = \diag(G)e^T+e\diag(G)^T-2G. % = D.
\]
Here $e$ is the vector of ones and $\diag(G)$ is the linear mapping
providing the vector of diagonal elements of the square matrix $G$. 
{Moreover, $\|\cdot \|$, $\|\cdot\|_F$,
denote the Euclidean and Frobenius norms, respectively.}
\label{page:2}%\linelabel{page:ln2}

The optimization problem~\cref{eq:s2P} is a
Euclidean distance geometry problem (DGP), see e.g.,~\cite{ MR3246296}. 
DGP includes the \EDM Completion Problem,
where $\bar D$ can have missing entries as well as noisy entries.
This latter problem has been proven to be NP-Hard~\cite{saxe}.

The objective function of \cref{eq:s2P} is denoted and expressed as a quartic in $P$:
\index{smooth stress, $\sigma_2(P)$} 
\index{$\sigma_2(P)$, smooth stress} 
%\linelabel{line:quartichard}
\label{page:quartichard}
$$\sigma_2(P) = \| \KK(PP^T) - \bar{D} \|_F^2 = \sum_{i=1}^n \sum_{j=1}^n \left( \| p_i - p_j \|^2 - \| \bar{p}_i - \bar{p}_j \|^2 \right)^2,
$$
which is referred to as the \textdef{smooth stress} 
in the multidimensional scaling (MDS) literature. 
{(Throughout this text, for notational convenience, we use
\textdef{$f(P)=\frac 12 \sigma_2(P)$} as our objective function.)} 
\index{$f(P)= \frac 12 \|F(P)\|^2_F$}

%%%\begin{noteH}
%%%the following motivation was added ... needs to be polished!!!
%%%\end{noteH}
%\linelabel{line:motivexactsol}
\label{page:motivexactsol}
Since the function $\sigma_2(P)$ is nonconvex, and optimization methods
generally find local minima,
we investigate the possibility for {such a quartic function to have all local minimizers as global minimizers. 
This question has been considered} as open and has been widely studied in  the MDS literature; {see for example,}
%\linelabel{line:literature}
\cite{MaloneTrosset2000b,MaloneTarazagaTrosset2002,Trosset97,Parhizkar:196439}. 
One of the motivations for the research about  the local nonglobal minima ({\bf lngm})
of $\sigma_2(P)$ is that the characterization of the \lngm is critical to developing efficient algorithms 
without resorting to convex (semidefinite programming) relaxations.
%Note that one can relax the nonconvex problem \cref{eq:s2P} by using the
%substitution $G\leftarrow PP^T$ and solving for the Gram matrix $G$
%using semidefinite programming. However, such a relaxation often results
%in a $G\succeq 0$ with rank larger than the embedding dimension $d$. 
%A factorization $\tilde G=\tilde P\tilde P^T$
%requires rounding the \emph{small} eigenvalues of $G$ to $0$ to get 
%$\tilde G$ and obtain the correct embedding dimension for
%the approximate configuration matrix $\tilde P$. These
%roundings often result in poor approximations. {Knowing that solving \cref{eq:s2P} yields a global solution would offer a precise approach for tackling these problems.}

%Note that a \textdef{pseudoconvex function}
% is one where every stationary point is
%a global minimum, just as for convex functions. Our question is weaker in the
%sense that we are trying to verify whether all local minima are global
%minima while ignoring stationary points that are not local minima.

The question about the existence of a \lngm
was also considered for another type of stress function,
called the \emph{raw stress}:
\index{raw stress, $\sigma_1(P)$} 
\index{$\sigma_1(P)$, raw stress} 
$$\sigma_1(P) = \sum_{i=1}^n \sum_{j=1}^n \left( \| p_i - p_j \| - \| \bar{p}_i - \bar{p}_j \| \right)^2.$$
Trosset and Mathar  \cite{Trosset97} analytically verified that the raw
stress function $\sigma_1(P)$ admits a \lngm, where $\bar P$ is a square
configuration with vertices at the four points $\bar p_1=[1/2, 1/2], \bar p_2=[-1/2, 1/2], \bar p_3=[-1/2, -1/2]$, and $\bar p_4=[1/2, -1/2]$. 
The authors of \cite{Zilinskas03} applied this
example to the smooth function $\sigma_{2}(P)$ but did not find
a \lngmp. Instead, they present an example of the \emph{inexact} \EDM recovery
problem having a \lngmp; specifically, problem \cref{eq:s2P} with $\bar D$ replaced by $\Delta\in\Sn$
that is \emph{not} an \EDM. However, the question about the existence of a \lngm for the exact problem
$\sigma_{2}(P)$ remained open. Addressing this challenge involves two main difficulties: 
(1)  the apparent lack of simple examples exhibiting \lngm ; and (2) the
inherent complexity in proving the existence of a \lngm for such a
complex problem.

In this paper, we provide a definitive answer to this open question. 
{We prove that all second-order stationary points are global minimizers whenever $n \leq d + 1$. 
For $n > d+1$, we present an example in dimension $d=1$, with a very special structure, 
for which we can analytically exhibit a \lngm. 
For more general cases,} 
we find examples where the function $\sigma_{2}(P)$ has a \lngmp, and we provide analytic verification
using techniques from the local convergence proof of Newton's method.
The examples are obtained using the trust region approach with random initializations.

The rest of this paper is arranged as follows.
We continue in~\Cref{sect:mainprob} with a description of our main
unconstrained minimization problem \cref{eq:s2P}. We introduce two additional equivalent
problems with reduced numbers of variables. The reduction allows for strict
second-order sufficient optimality conditions and thus is necessary for the analytic existence proof.
In~\Cref{sect:properties}, we include various linear transformations,
derivatives, and adjoints. Many of these are used throughout this paper.
We suggest that they provide a useful addition to the literature on \EDMp, 
as it emphasizes the use of matrix transformations rather than individual elements or points.
In \Cref{sect:optcondsens}, we study the optimality conditions and establish that $\sigma_{2}(P)$ has no
\lngmp\ if $n \leq d+1$. {In \Cref{sect:analytexlngm}, we give a
special example with $d=1$ where a \lngmp\ can be explictly illustrated}. 
In \Cref{sect:KantSens}, we provide two examples and use
the Kantorovich theorem to (numerically) prove the
existence of \lngmp s {in this more general setting}.

\section{Notation and {Equivalent} Main Problem Formulations}
\label{sect:mainprob}
%\linelabel{line:innerprod}
Before presenting the problem  formulations, we introduce the necessary notation and background from distance geometry.
Further details are provided in \cite{alfm18}.

\subsection{Notation}
\label{sect:notat}
We use the
\textdef{trace inner product} in $m\times n$ matrix space $\Rmn$,
 $\langle X,Y\rangle = \trace X^TY$ with the induced  
 	\textdef{Frobenius norm, $\|X\|_F=\sqrt {\langle X, X\rangle}$}.  \index{$\|X\|_F$, Frobenius norm}
Denote $\|X\|_2:=\sqrt{\lambda_{\max}(X^TX)}$, where
$\lambda_{\max}(\cdot)$ gives the largest eigenvalue. If no subscript in
$\|\cdot\|$ is written, the Frobenius norm $\|\cdot\|_F$ is understood.
We note that both $\|\cdot\|_F$ and $\|\cdot\|_2$ reduce to the standard 
$2$-norm in $\Rn$ ($n\times 1$ matrices) $\|x\|$. 
For a finite dimensional Hilbert space ${\cal X}$,
we use $B_r(\tilde x):= \{ x \in {\cal X} \mid \| x - \tilde x\|
\leq r \}$ to denote the ball centered at $\tilde x$ with radius $r>0$.

Let $\Sn$ be the set of symmetric matrices in $\R^{n\times n}$. 
The cone of positive semidefinite matrices is denoted by $\Snp \subset
\Sn$, and we use $S\succeq 0$ for $S\in \Snp$. Similarly, for positive
definite matrices $S$, we use $S\in\Snpp$ and $S\succ 0$. 
%\linelabel{line:Snngpos}
Additionally, {$S \geq 0$ and $S>0$} denote that all entries of $S$ are
non-negative and positive, respectively.
Let \textdef{$\diag(S)\in \Rn$} denote
the vector formed by the diagonal of a matrix $S\in \Rnn$. The adjoint operator
\textdef{$\diag^*(v)=\Diag(v)\in \Sn$} maps a vector  $v\in \Rn$ to $\Diag(v) \in \Sn$, the diagonal matrix with entries from $v$.
%We use $A \otimes B$ to
%denote the Kronecker product of two matrices $A,B$.
For a matrix $C \in \Rnd$, $\kvec (C) \in \R^{nd}$ denotes the column vector formed by stacking 
the columns of $C$, and $\kMat \cong \kvec^*$ is the
adjoint of $\kvec$, satisfying $\kMat(\kvec(C)) = C$ for all $C \in \Rnd$.
%\linelabel{line:Frechet}
\label{page:Frechet}If $F: {\cal X} \rightarrow {\cal Y}$ is a map between finite
dimensional Hilbert spaces, $F'(P)$ and $F''(P)$ denote its Fr\'echet
derivatives at $P \in {\cal X}$. 

\index{$\Snp$, positive semidefinite cone}
\index{positive semidefinite cone, $\Snp$}
\index{$\Snpp$, positive definite cone}
\index{positive definite cone, $\Snpp$}

\index{$\|\cdot\|_F$, Frobenius norm}
\index{Frobenius norm, $\|\cdot\|_F$}
\index{$e$, vector of ones}
\index{vector of ones, $e$}
\index{embedding dimension, $d$}
\index{$d$, embedding dimension}
\index{configuration matrix, $\bar P$}
\index{$\bar P$, configuration matrix}

For a set of points $p_i\in \R^d,\ i\in [n]:=\{1, 2, \cdots, n\}$, denote
the \emph{configuration matrix} by $$P=\begin{bmatrix} p_1\  
	p_2\  \ldots\  p_{n} \end{bmatrix}^T\in \R^{n\times d}.$$ Here $d$ is the embedding dimension. 
Denote the quadratic mapping $\cM : \Rnd \to \Sn$, \textdef{$\cM(P) = PP^T$}.
\index{Euclidean distance matrix, \EDMp} 
\label{page:Pcentered}
%\linelabel{line:Pcentered}
Recall that $e$ denotes the column vector of ones of appropriate dimension. 
Then, the classical result of
Sch\"oenberg~\cite{Sch35} relates an \EDM, $\cD(P)$,
with the corresponding \textdef{Gram matrix, $G=\cM(P)$}, by applying the
linear operator $\KK: \SC \to \SH$: 
\begin{equation}
	\label{eq:Dbardata}
 \KK(G) = \diag(G)e^T+e\diag(G)^T-2G =  {\left(\|p_i-p_j\|^2\right)_{ij} =:
	\cD(P),} 
\end{equation}\label{page:2.1}
where the 
\textdef{centered subspace, $\SC$} and the \textdef{hollow subspace, $\SH$} are defined by 
\index{$\SC$, centered} 
\index{$\SH$, hollow} 
\[
\SC = \{S\in \Sn \,:\, Se = 0\}, \quad
\SH = \{S\in \Sn \,:\, \diag(S) = 0\}.
\]
Denote $S_e: \Rn \to \Sn$:  
\textdef{$S_e(v) = ve^T+ev^T$}. Then, $ \KK(G)=S_e(\diag(G))-2G$.
Note that the centered assumption $P^Te=0 \Leftrightarrow G=PP^T\in \SC$.
Also, when the domain of $\KK$ is restricted to be $\SC$, the mapping $\KK$ is a bijection between $\SC$ and $\KK(\SC)$.

\index{\EDMp, Euclidean distance matrix}
\index{$\KK(G)$}

Further detailed properties and a list of (non)linear transformations
and adjoints are given in~\Cref{sect:properties}.

\subsection{Main Problem Formulations}

Suppose that we are given a centered configuration matrix $\bar P\in
\Rnd, \bar P^Te=0$. 
This gives rise to the corresponding Gram
matrix $\bar G = \bar P\bar P^T\in \SC$ and \EDMp, 
	$\bar D  = \cK(\bar G)$.
We now present the main problem and two reformulations that reduce the size of variables and help with stability.

\index{\lngmp, local nonglobal minimum}

\begin{problem}
	\label{prob:origF}
	Let $\bar D=	\cD(\bar P)$ be a \EDM obtained from some
given configuration matrix $\bar P$.
	Consider the %local 
	\emph{nonconvex} minimization problem of \underline{recovering}
	a corresponding  configuration matrix $\hat{P}$ given by  
	\begin{equation}\label{eq:F}
		\hat P \in \argmin\limits_{P\in \Rnd} 
		\textdef{$f(P) := \frac 12 \|\cK(PP^T) - \bar D\|_F^2$}
		=:\frac 12 \|F(P)\|^2_F;
	\end{equation}
	thus defining the function \textdef{$F : \Rnd \to \SH$}.
\end{problem}

\index{$t(n-1):=n(n-1)/2$, triangular number}

\Cref{prob:origF}
is a \underline{non}linear least squares problem. 
It has $nd$ variables.
By taking advantage of symmetry and the zero-diagonal constraints, 
the objective function can be seen as
a sum of squares of $t(n-1)$ quadratic functions, where  $t(n-1):=n(n-1)/2$ is the \textdef{triangular number}.
Note that $\bar P$ is a global minimizer for
\cref{eq:F}
with the optimal value $f(\bar P) = 0$.
We  study whether all stationary
points where the second-order necessary optimality conditions hold are global minimizers. 

\index{$t(n-1)=n(n-1)/2$, triangular number} 
\index{\lngmp, local nonglobal minimum}

Note that the distance matrix is invariant under translations and
rotations of $P$. Without loss of generality, we assume $P$ is centered ($P^Te = 0$).
\index{embedding dimension, $d$}
%We let 
Let $V \in \R^{n \times (n-1)}$ be such that
\begin{equation}
	\label{eq:Vmatr}
	 V^TV=I_{n-1}, \quad V^Te = 0.
\end{equation}
By the fact that $VV^T$ is the orthogonal projection onto $e^\perp$ (the orthogonal complement of $e$),   
\label{page:PVL}
%\linelabel{line:PVL}
we have $P^Te=0$ if, and only if, $P=VL$ for some $L\in
\R^{(n-1) \times d}$.\footnote{This is similar to the application of
\textdef{facial reduction} for the semidefinite relaxation, 
see~\cite{AlKaWo:97}.}  We exploit this property for deducing an equivalent problem formulation having a smaller dimension.
 % tblue ending
\begin{problem}
	\label{prob:reducedmainptrecover}
Let $\bar P, \bar D$ be as given in
\Cref{prob:origF}, and let $V$ be as in~\cref{eq:Vmatr}. Consider the %local 
\emph{nonconvex} minimization problem of \underline{recovering}
a corresponding  centered configuration matrix $\hat{P}=V\hat{L}$ by
finding
	\begin{equation}
		\label{eq:VL}
		\hat L \in \argmin_{L\in \R^{(n-1)\times d} }
		\textdef{$f_L(L) := \frac 12 \|\cK(VL(VL)^T) - \bar D\|_F^2$} =:\frac 12
		\|F_L(L)\|^2_F;
	\end{equation}
	thus defining the function \textdef{$F_L : \R^{(n-1)\times d} \to 
		\SH$}. 
\end{problem}
\index{$f_L(L)$}

\index{$\cO = \{Q\in \Rdd : Q^TQ=I_d\}$, orthogonal group of order $d$}
\index{orthogonal group order $d$, $\cO = \{Q\in \Rdd : Q^TQ=I_d\}$}

%\linelabel{line:orthgp}
Let $\cO = \{Q\in \Rdd : Q^TQ=I_d\}$ be the orthogonal group {of order $d$}.
Note that $LL^T = LQQ^TL^T$ holds for all $Q \in \cO$. 
If $L^T = QR$ is the QR factorization, then $$R^T = LQ \Rightarrow f_L(L) = f_L(R^TQ^T) = f_L(R^T),$$ where $R\in \R^{d\times (n-1)}$ is upper triangular (trapezoidal when $d<n-1$).
The problem can be further reduced using rotation invariance: $f_L(LQ)=
f_L(L), \, \forall Q\in \cO$. 

\label{page:Ltriagdef}
%\linelabel{line:Ltriagdef}
Recall that the linear transformation $\svec: \Sn \to \R^{t(n)}$ is a
generalization of the vectorization $\kvec$ applied to symmetric
matrices that avoids the duplication of the lower triangular part.
We now extend this idea to triangular (trapezoidal) matrices to avoid the
zeros.
We define the linear operator that maps a vector  $\ell
\in \R^{t_{\ell}}$ to a lower triangular (trapezoidal)
matrix in $\Rnod$ given by
	%\linelabel{line:ltriagdef1}
\begin{equation}
	\label{eq:ltriagdef1}
 \textdef{$\LTriag(\ell )$}_{(i,j)}=\left\{
	\begin{array}{cl}
		\ell_{nj-n-t(j)+i+1}, & \text{if  } j\le i \\
		0, & \text{otherwise,}
	\end{array}
	\right.
\end{equation}
where 
\index{$t_\ell$, in \cref{eq:tellzeros1}}
\begin{equation}\label{eq:tellzeros1}t_\ell = \left\{
	\begin{array}{cl}
		t(n-1), & \text{if  } d\geq n-1 \\
		(n-1)d-t(d-1), & \text{otherwise.}\\
	\end{array}
	\right.
\end{equation}
For $L\in\Rnod$ lower trapezoidal, $t_\ell$ counts its entries where nonzero values are allowed. 
%\linelabel{line:zeroels}
For $d < n-1$,  \textdef{$\LTriag(\ell )$} has $t(d-1)$ {zero elements} at the top right;
whereas for $d\geq n-1$,  \textdef{$\LTriag(\ell )$} has $t(n-1)$ {nonzero elements} at the
bottom left.

Using the above definition, we define
\begin{equation}
	\label{eq:flell1}
	f_\ell : \R^{t_\ell }  \to \R, \quad 
	\textdef{$f_\ell(\ell) := f_L ( \LTriag(\ell))$}. 
\end{equation}
Notice that the adjoint of $\LTriag$, $\LTriag^* : \Rnod \to \R^{t_\ell} $, takes the lower triangular (trapezoidal) part of $L \in \Rnod$ and maps it to the corresponding vector $\ell\in \R^{t_\ell }$, such that $\LTriag^*\LTriag(\ell) = \ell$. Moreover, $\LTriag \LTriag^* (L)$ is the projection of $L$ onto the subspace of lower triangular (trapezoidal) matrices.

\begin{problem}  
\label{prob:reducedmainptrecovertriang}
Let $\bar P, \bar D$ be as given in \Cref{prob:reducedmainptrecover} (and in
\Cref{prob:origF}), and let $V, t_\ell, f_\ell(\ell)$,  be as 
	in~\cref{eq:Vmatr,eq:tellzeros1,eq:flell1}, respectively.
	Consider the %local 
	\emph{nonconvex} minimization problem of \underline{recovering}
	a corresponding  centered configuration matrix 
	$\hat{P}=V\hat{L}=V\LTriag(\hat{\ell})\hat{Q}^T$, with $\hat{Q}
\in \cO$, by finding
	\begin{equation}
		\label{eq:VLtriang}
\begin{array}{rcl}
		\hat \ell \in \argmin_{\ell \in \R^{t_\ell }}
		f_{\ell}(\ell) 
&:=&
 \frac 12 \|\cK(V\LTriag(\ell) (V\LTriag(\ell) )^T) - \bar
		D\|_F^2 
\\&& \qquad =\frac 12 \|F_{L}(\LTriag(\ell) )\|^2_F =: \frac 12
\|F_{\ell}(\ell)\|^2_F;
\end{array}
    \end{equation}
    thus defining the function \textdef{$F_\ell : \R^{t_{\ell}} \to \SH$}.
\end{problem}

\begin{remark}
	\label{rem:mainreducprobtriang} 
	Compared to \Cref{prob:origF}, \Cref{prob:reducedmainptrecovertriang} is a nonlinear least
	squares problem, with fewer variables and the same number of quadratic {terms}  
	$\left(\cK(V\LTriag(\ell) (V\LTriag(\ell) )^T) - \bar D\right)_{ij}, i<j$.
	For $d < n - 1$, the underlying system of
equations is overdetermined, as $t_\ell < t(n-1)$. For $d \geq n-1$,
from \cref{eq:tellzeros1}, the number of variables is $t(n-1)$, the same
as the number of quadratic equations. {Thus we no longer have the
singularity that arises for the Jacobian of an underdetermined nonlinear
least squares problem.}
\end{remark}

In order to determine whether a
\lngm exists, the next section provides useful formulae for
linear transformations and derivatives.

\section{Properties and auxiliary results}  \label{sect:properties}

We now provide appropriate notation and formulae for transformations,
adjoints and derivatives involved in \EDMp, and then give the
equivalence relationships among local minimizers of the above three reformulations.

\subsection{Transformations, Derivatives, Adjoints, Range and Null Spaces}
\label{sect:transderadj}

\Cref{lem:adjderivs} below presents a list of auxiliary results. 
{It concerns the following vectors,
matrices and functions:}
\[
\begin{array}{c}
	P\in \Rnd,\, p=\kvec(P) \in \R^{nd}, \Delta P\in \Rnd,\, \Delta p=\kvec(\Delta P) \in \R^{nd}, 
	\\ L\in \Rnod,\ \ell\in\R^{t_\ell},t_\ell \text  {\ in
	   \cref{eq:tellzeros1}},\, S, T\in \Sn;
\end{array}
\]
\[
\begin{array}{c}
	\cM : \Rnd \to \Sn,\, \cK : \Sn\to\Sn,\, F: \Rnd \to \SH,\,f:\Rnd \to \R, 
	\\
	\LTriag : \R^{t_\ell}  \to \Rnod,\  S_e: \R^n\to\Sn,\  F_L : \Rnod \to \SH,\  f_L:\Rnod \to \R, 
	\\
	\ F_\ell:\R^{t_\ell}\to\SH,\  f_\ell:\R^{t_\ell}\to\R.
\end{array}
	\]

\begin{lemma}	\label{lem:adjderivs} We have the following  first and second Fr\'echet derivatives and adjoints:
	\begin{enumerate}
		\item
\label{item:firsstorderexp}
		\textdef{$\cM^\prime(P)(\Delta P) = P\Delta P^T+\Delta P P^T$}, 
		$\cM^{\prime\prime}(P)(\Delta P, \Delta P) =2\Delta P \Delta P^T$.

		\item\label{item:Mpadj}
		\textdef{$\cM^\prime(P)^*(S) = 2SP$}.

		\item
\label{item:SeSSe}
		\textdef{$S_e^*(S)= 2Se$}.

		\item
		\label{item:cK}
		\textdef{$\cK(G) = S_e(\diag(G)) - 2G$}, $\range(\cK) = \SH, \, \nul(\cK) = \range(S_e)$.

		\item  
		\label{item:LaplKsNinvar}
		$\cK^*(S)=2(\Diag (Se)-S)$,  
		$\range(\cK^*) = \SC,\,\nul(\cK^*) = \Diag(\Rn)$. Moreover, 
		$S\geq (\leq) 0 \implies \cK^*(S) \succeq (\preceq) 0$.

		\item
		\textdef{$\cD(P)= S_e(\diag(\cM(P)))-2\cM(P)$.}
		
		\item
		\textdef{$ \cD^\prime(P)(\Delta P) = S_e(\diag(\cM^\prime(P)(\Delta
			P))-2\cM^\prime(P)(\Delta P)$}. 
		
		\item
		\label{item:firstsecondderivF}
		\textdef{$F^\prime(P)(\Delta P) = \cK\left(\cM^\prime(P)(\Delta P)\right)$}, 
		\textdef{$F^{\prime\prime}(P)(\Delta P,\Delta P) =
			\cK\left(\cM^{\prime\prime}(P)(\Delta P,\Delta P)\right)$}.

		\item\label{item:Fpadj} 
		$F^\prime(P)^*(S) =\cM^\prime(P)^*(\cK^*(S)) =4(\Diag (Se)-S)P$.

		\item
\label{item:symmLT}
We have  
		\begin{equation}
			\label{eq:1storderHess}
	f^\prime(P)=F^\prime(P)^*(F(P)) =4[\Diag (F(P)e)-F(P)]P,
		\end{equation}
		and 
\begin{equation}
\label{eq:fprpr}
\begin{array}{rcl}
f^{\prime\prime}(P)(\Delta P, \Delta P) & = & \langle \cK(P\Delta P^T+\Delta P P^T), \cK(P\Delta	P^T+\Delta P P^T)\rangle \\
& & \qquad +2\langle F(P), \cK(\Delta P \Delta P^T)\rangle.
\end{array}
\end{equation}
		\index{$\cS(K)=(K+K^T)/2$}
	\end{enumerate}
\end{lemma}

\begin{proof}
\begin{enumerate}
\item 		It follows directly from the expansion 
		$$
		\begin{array}{rcl}
			\cM(P+\Delta P)
			&=&
			(P+\Delta P)(P+\Delta P)^T\\
			&=&
			PP^T+\Delta P P^T+ P\Delta P^T+ \Delta P\Delta P^T
			\\&=&
			\cM(P)+\cM^\prime(P)(\Delta P) + \frac 12 
			\cM^{\prime\prime}(P)(\Delta P, \Delta P) .
		\end{array}
		$$

\item 	Note that	\[
		\begin{array}{rcl}
			\langle  \cM^\prime(P)(\Delta P),S \rangle 
			&=&
			\langle  P\Delta P^T+\Delta P P^T,S \rangle 
			\\&=&
			\trace (P\Delta P^TS +\Delta P P^TS)
			\\&=&
			\trace (SP\Delta P^T +SP\Delta P^T)
			\\&=&
			\langle  \cM^\prime(P)^*(S),\Delta P \rangle.
		\end{array}
		\]
\item 	From the trace inner product,	
\[
		\langle S_e(v),S\rangle = \trace (ev^TS+ve^TS) =  \trace(v^TSe) +
		\trace(Sev^T) = \langle 2Se,v\rangle.
		\]

\item See~\cite[Prop. 2.2]{homwolkA:04}.

\item See~\cite[Prop. 2.2]{homwolkA:04} for the characterization of the nullspace of $\cK^*$. 
		\index{$\cK^*(S)=2(\Diag (Se)-S)$, Laplacian matrix} 
		\index{Laplacian matrix, $\cK^*(S)=2(\Diag (Se)-S)$} 
		The identity $\cK^*(S)=2(\Diag (Se)-S)$ follows from 
		\begin{equation*}
			\begin{array}{ccl}
				\langle \cK(T), S\rangle &=&\langle\diag(T)e^T+e\diag(T)^T-2T
				, S\rangle\\
				&=&2\trace(e^TS \diag(T))-2\trace(TS)\\
				&=&2\langle Se, \diag(T)\rangle-2\langle T, S\rangle\\
				&=&
				2\langle \Diag (Se)-S, T\rangle,
			\end{array}
		\end{equation*}
		where the last equality is due to $\Diag = \diag^*$. Moreover, for $S\in \Sn$, we have by diagonal dominance that
		$S\geq (\leq) 0 \implies \cK^*(S) \succeq (\preceq) 0$.

\item It follows directly from \Cref{item:LaplKsNinvar}.

\item This follows from the linearity of $\diag$ and $S_e$.

\item  Both follow from the definitions and linearity of $\cK$.

\item  
\label{iterm:FpPSMpPKS}
It follows from 
\begin{equation*}
			\begin{array}{ccl}
				\langle F^\prime(P)(\Delta P), S\rangle &=&\langle \cK (\cM^\prime(P)(\Delta P)), S\rangle\\
				&=&\langle  \cM^\prime(P)(\Delta P), \cK^*(S)\rangle\\
				&=&\langle \Delta P, \cM^\prime(P)^*(\cK^*(S))\rangle,
			\end{array}
		\end{equation*}
and
$\cM^\prime(P)^*$ and $\cK^*(S)$ presented in 
\Cref{item:Mpadj,item:LaplKsNinvar}.

\item 		From the expansion of $f(P+\Delta P)$, 
		\begin{equation*}
			\begin{array}{ccl}
				&&f(P+\Delta P)\\
				&=&\frac{1}{2}\langle F(P+\Delta P), F(P+\Delta P)\rangle\\
				&=&\frac{1}{2}\| F(P)+F^\prime(P)(\Delta P)+\frac{1}{2}F^{\prime\prime}(P)(\Delta P, \Delta P)+o({\|\Delta P\|}^2)\|^2,\\
				&=&\frac{1}{2}\langle F(P), F(P)\rangle+\langle F(P), F^\prime(P)(\Delta P)\rangle\\
				&&+\frac{1}{2}\langle F^\prime(P)(\Delta P), F^\prime(P)(\Delta P)\rangle+\frac{1}{2}\langle F(P), F^{\prime\prime}(P)(\Delta P, \Delta P)\rangle+o(\|\Delta P\|^2),
			\end{array}
		\end{equation*}
we get \cref{eq:1storderHess}. Then
		we obtain 
		\begin{equation}
			\label{eq:Hessiantwoterms}
			\begin{array}{ccl}
				&& f^{\prime\prime}(P)(\Delta P, \Delta P)\\
				&=&\langle F^\prime(P)(\Delta P), F^\prime(P)(\Delta P)\rangle+\langle F(P), F^{\prime\prime}(P)(\Delta P, \Delta P)\rangle\\
				&=&\langle \cK(\cM^\prime(P)(\Delta P)), \cK(\cM^\prime(P)(\Delta P))\rangle+\langle F(P), \cK(\cM^{\prime\prime}(P)(\Delta P, \Delta P))\rangle
				\\
				&=&
				\langle \cK(P\Delta P^T+\Delta P P^T), \cK(P\Delta
				P^T+\Delta P P^T)\rangle+2\langle F(P), \cK(\Delta P \Delta P^T)\rangle,
			\end{array}
		\end{equation}
where the second equality follows from \Cref{item:firstsecondderivF}. Define $\Delta p := \kvec(\Delta P)$. 
	Now, we can isolate the matrix representation with 
$$\begin{array}{ccl}
				f^{\prime\prime}(P)(\Delta P, \Delta P)
				&=&
				\langle f^{\prime\prime}(P)(\Mat \kvec (\Delta P)), 
				\Mat \kvec (\Delta P) \rangle
				\\&=&
				\langle \left[\kvec f^{\prime\prime}(P)\Mat\right] \left( \Delta p\right), \left(\Delta p\right) \rangle.
			\end{array}$$
Denote the symmetrization
$\cS: \Rnn\rightarrow\Sn, \cS(K)=(K+K^T)/2$, and let
$\cT$ be the self-adjoint transpose operator. The first term in~\cref{eq:Hessiantwoterms} is
		\begin{equation}
			\label{eq:firstterm}
			\begin{array}{ccl}
				&&
				4\langle \cK(\cS(P(\Mat \kvec \Delta P)^T)),\cK(\cS(P(\Mat
				\kvec \Delta P)^T)) \rangle\\
				&=&
				4\langle (P^T\cS^*\cK^*\cK\cS P)((\Mat \kvec \Delta P)^T),(\Mat
				\kvec \Delta P)^T \rangle\\
				&=&4\langle (P^T\cS^*\cK^*\cK\cS P)(\cT \Mat \kvec \Delta P),(\cT \Mat
				\kvec \Delta P) \rangle\\
				&=&
				4\left\langle \left[\kvec \cT^*P^T\cS^*\cK^*\cK\cS P \cT \Mat
				\right] \Delta p,
				\Delta p \right\rangle.
			\end{array}
		\end{equation}
		The second term in~\cref{eq:Hessiantwoterms} is
		\begin{equation}
			\label{eq:secondterm}
			\begin{array}{ccl}
				&&2\langle F(P), \cK\left(\Delta P  \Delta P^T\right)\rangle\\
				&=&
				2\left\langle \cK^*\left(F(P)\right),\Delta P \Delta P^T\right\rangle
				\\&=&2\left\langle \Delta P, \cK^*(F(P)) \Delta P\right\rangle
				\\&=&2\left\langle  \left[\kvec \cK^*F(P)\Mat \right] 
				\Delta p, \Delta p\right\rangle.
			\end{array}
		\end{equation}
		Recall that $F^\prime(P)(\Delta P) = \cK(\cM^\prime(P)(\Delta P))$.
		We combine~\cref{eq:firstterm,eq:secondterm} and obtain
		the matrix representation of the Hessian (not necessarily positive
		semidefinite) :
		\begin{equation}
			\label{eq:2ndorderHess}
			\begin{array}{rcl}
				\left[\kvec f^{\prime\prime}(P)\Mat\right]
				&=&
				4\left[\kvec \cT^*P^T\cS^*\cK^*\cK\cS P \cT \Mat \right] 
				\\&& \qquad  \qquad + 2\left[\kvec \cK^*F(P)\Mat \right] 
				\\&=&
				4\left[J^* J \right] + 2\left[\kvec \cK^*F(P)\Mat \right],
			\end{array}
		\end{equation}
		where 
		\begin{equation}
			\label{eq:dJ}
			J (\Delta p ):= \cK \cS P \cT \Mat \Delta p.
		\end{equation}
%%%\begin{noteH}
%%%for the reflection for the \lngmp, $P \leftarrow \cI P$; therefore a
%%%quadratic term that is positive definite stays positive definite,
%%%e.g.  $(\cI x)^T Q (\cI x)$ stays pos def for any lin trans $\cI$.
%%%
%%%We have one quadratic terms among the two terms, i.e. first term.
%%%The other is linear.
%%%Therefore if $P$ is LARGE the quadratic will dominate and the reflection
%%%will stay positive definite.
%%%\end{noteH}
\end{enumerate}
\end{proof}

%%%%\begin{noteH}
%%%%Henry read/edited till here jan18/25
%%%%\end{noteH}

\begin{theorem}\label{thm:3.2}
   The second-order necessary optimality conditions for 
	\cref{eq:F} are:
	\begin{equation}\label{eq:1opt}
		\begin{array}{rcl}
	0 &=& f'(P) = F^\prime(P)^*(F(P))= 2\cK^*(F(P))P,
		\end{array}
	\end{equation}
	\begin{equation}\label{eq:2opt}
		\begin{array}{rcl}
			0 &\preceq &
			\left[\kvec f^{\prime\prime}(P)\Mat\right] = 4\left[J^*J \right] + 2\left[\kvec \cK^*(F(P))\Mat \right].
		\end{array}
	\end{equation}
\end{theorem}
\begin{proof}
	% This follows from \cref{eq:2ndorderHess} and \Cref{lem:adjderivs},~\Cref{item:symmLT}.
\label{page:f'}%\linelabel{line:f'}
{The second equality in \cref{eq:1opt} follows from \cref{eq:1storderHess}, and the third equality in \cref{eq:1opt} follows from 
\Cref{item:Fpadj,item:Mpadj} of \Cref{lem:adjderivs}.} In
particular, we have
\begin{equation}\label{eq:rev1}
	f'(P) = F'(P)^*(F(P)) = \cM'(P)^*(\cK^*(F(P))) = 2 \cK^*(F(P))P.
\end{equation}
	The expression for the second-order term in \cref{eq:2opt}
follows from \cref{eq:2ndorderHess} in
\Cref{item:symmLT} of \Cref{lem:adjderivs}.
\end{proof}
Throughout the paper, we denote the following two matrices in $\mathbb{S}^{nd}$: 
\begin{equation}
	\label{eq:H12}  
	\textdef{$H_1=[J^* J]$},\  \textdef{$H_2=[\kvec
		\left(\cK^*\left(F(P\right)\right)\Mat]$}.
\end{equation}
{By abuse of notation, $H_1$ and $H_2$ represent both the linear
maps and their matrix representations. The meaning will be clear from
the context.}\label{page:H1H2}%\linelabel{line:H1H2}
We call $P$ a \textdef{stationary point} if \cref{eq:1opt} holds, and we
call $P$ a \textdef{second-order stationary point} if both \cref{eq:1opt} and \cref{eq:2opt} hold. 
	\subsection{Optimality Conditions of Three Problem Formulations}
	According to the chain rule, the derivatives and  optimality conditions  of $f_L$ defined in \cref{eq:VL} and $f_\ell$ defined in \cref{eq:VLtriang} can be easily obtained from that of $f$.
	\begin{proposition} 
\label{pro:fL''}
		The derivatives of $f_L(L):\R^{(n-1)\times d}\to \R$ are
		\begin{equation}\label{eq:1optfv0}
			\begin{array}{rcl}
				f'_L(L) =V^Tf'(VL),
			\end{array}
		\end{equation}
		and
		\begin{equation}\label{eq:2optfv0}
			\begin{array}{rcl}
				&&
				f''_L(L) = V^Tf''(VL)V.
			\end{array}
		\end{equation}
	\end{proposition} 
	
	\begin{proposition} 
\label{pro:flpandpp}
		The derivatives of $f_\ell(\ell):\R^{t_{\ell}}\to \R$ are
		\begin{equation}\label{eq:1optfl}
			\begin{array}{rcl}
	f'_\ell(\ell) =\LTriag^*f'_L(\LTriag(\ell)),
			\end{array}
		\end{equation}
		and
		\begin{equation}\label{eq:2optfl}
			\begin{array}{rcl}
				&&
				f''_\ell(\ell) = \LTriag^*f''_L(\LTriag(\ell))\LTriag.
			\end{array}
		\end{equation}
	\end{proposition} 
	
In the following, we show that any local minimizer of \cref{eq:VL}
corresponds to a family of local minimizers of \cref{eq:F}, obtained by
translations.  Similarly, any local minimizer of \cref{eq:VLtriang} 
corresponds to a family of local minimizers of \cref{eq:VL}, derived from
rotations.

 %%%%%%%%%%%%%%%%%%%%%%

\begin{proposition}  \label{th:ellL}
The configuration matrix ${P_*}\in\R^{n\times d}$ is a local minimizer of the function~$f$ (see \cref{eq:F}) if,
and only if, all configurations in $\{P_* + ev^T: v\in\R^d\}$ are local minimizers of the function $f$. 
\end{proposition}

\begin{proof}
We exploit the fact that the function $f$ is invariant w.r.t.~translations:
for any point $P$, we have
\begin{displaymath}
f(P) = f(P + ev^T) .
\end{displaymath}
%%%Therefore, if $P_*$ is a local minimizer of $f$, there must be a $\delta > 0$ such that:
%%%\begin{displaymath}
%%%\forall P \, : \, || P_* - P ||_F \le \delta ,\quad f(P_* + ev^T) = f(P_*) \le f(P) = f(P + ev^T) ,
%%%\end{displaymath}
%%%implying that $P_* + ev^T$ is also a local minimizer. The other implication follows similarly.
If $P_*$ is a local minimizer of $f$, there must be a $\delta > 0$ such that:
\begin{displaymath}
\forall P \, : \, || P_* - P ||_F \le \delta ,\quad f(P_*) \le f(P).
\end{displaymath}
Then, for all $\hat P$ such that $\| \hat P - (P_* + ev^T) \|_F \leq
\delta$, 
we have $\| (\hat P - ev^T) - P_* \|_F \leq \delta$, thus
\begin{displaymath}
f(\hat P) = f(\hat P - ev^T) \geq f(P_*) = f(P_* + ev^T)
\end{displaymath}
implying that $P_* + ev^T$ is also a local minimizer. The other implication
follows similarly.
\end{proof}

\begin{proposition}  \label{th:eL}
The configuration matrix $L_*\in\R^{(n-1)\times d}$ is a local minimizer
of the function~$f_L$ (see \cref{eq:VL}) if,
and only if, all configurations in $\{L_*Q: Q\in\mathcal{O}\}$ are local minimizers of $f_L$.
\end{proposition}

\begin{proof}
We now exploit the fact that the function $f_L$ is invariant w.r.t.~rotations:
for any configuration $L$, and $Q \in \cO$, we have
\begin{displaymath}
f_L(L) = f_L(LQ) .
\end{displaymath}
%%%%Therefore, if $L_*$ is a local minimizer of $f_L$, there must be a $\delta > 0$ such that:
%%%%\begin{displaymath}
%%%%\forall L \, : \, || L_* - L ||_F \le \delta ,\quad f_L(L_*Q) = f_L(L_*) \le f_L(L) = f_L(LQ) ,
%%%%\end{displaymath}
If $L_*$ is a local minimizer of $f_L$, there must be a $\delta > 0$ such that:
\begin{displaymath}
\forall L \, : \, || L_* - L ||_F \le \delta ,\quad f_L(L_*) \le f_L(L).
\end{displaymath}
Then, for all $\hat L$ such that $\|\hat L - L_* Q \|_F \leq \delta$, 
we have 
\begin{displaymath}
\| \hat L Q^T - L_* \|_F = \|\hat L - L_* Q \|_F \leq \delta, 
\end{displaymath}
thus
\begin{displaymath}
f_L(\hat L) = f_L(\hat L Q^T) \geq f_L(L_*) = f_L(L_* Q)
\end{displaymath}
implying that $L_*Q$ is also a local minimizer. The other implication follows similarly.
\end{proof}

The local minimizers of the two functions in equations~\cref{eq:F} and~\cref{eq:VL} have the following relationships.

	\begin{theorem}\label{th:equi2}
		Let $P_*\in\R^{n\times d}$ and $V$ be as defined in \cref{eq:Vmatr}. 
		Denote
		\[
		v_*=\frac 1n P_*^{T}e\in\R^d,\, P_{v_*}=P_*-ev_*^{T},\,  L_*=V^TP_{v_*}.
		\]
		Then, $L_*$ is a local
		minimizer of \cref{eq:VL} if, and only if, $P_{v_*}$ and $P_*$ are local minimizers of \cref{eq:F}. 
	\end{theorem}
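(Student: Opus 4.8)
The plan is to exploit that $V$ restricts to an isometric bijection between $\Rnod$ and the centered subspace $\{P\in\Rnd:P^Te=0\}$, combined with the translation invariance of $f$ already established in \Cref{th:ellL}. I would first record the two elementary facts that do all the work. (i) Since $V^TV=I_{n-1}$, the map $L\mapsto VL$ is an isometry, so $\|VL_1-VL_2\|=\|L_1-L_2\|$; moreover, by definition $f_L=f\circ(L\mapsto VL)$. (ii) Since $VV^T$ is the orthogonal projector onto $e^\perp$, it satisfies $VV^T+\tfrac1n ee^T=I_n$ and $VV^T\preceq I_n$; hence $P\mapsto V^TP$ is a contraction, $\|V^TX\|\le\|X\|$, and $VV^TP=P-ev^T$ with $v=\tfrac1nP^Te$ is exactly the centering of $P$. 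In particular $P^*_v$ is centered, so $VL^*=VV^TP^*_v=P^*_v$ and $f_L(L^*)=f(VL^*)=f(P^*_v)$. Because \Cref{th:ellL} says $P^*$ is a local minimizer of \cref{eq:F} if and only if its translate $P^*_v$ is, it suffices to prove that $L^*$ is a local minimizer of \cref{eq:VL} if and only if $P^*_v$ is a local minimizer of \cref{eq:F}, and the statement about $P^*$ follows.

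For the ``if'' direction I would assume $f(P)\ge f(P^*_v)$ whenever $\|P-P^*_v\|\le\delta$, take any $L$ with $\|L-L^*\|\le\delta$, and use (i) to get $\|VL-P^*_v\|=\|VL-VL^*\|=\|L-L^*\|\le\delta$, whence $f_L(L)=f(VL)\ge f(P^*_v)=f_L(L^*)$. This direction is immediate because $V$ merely reparametrizes a lower-dimensional subspace isometrically, so the neighborhood transfers verbatim.

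For the ``only if'' direction I would assume $f_L(L)\ge f_L(L^*)$ whenever $\|L-L^*\|\le\delta$, take any $P$ with $\|P-P^*_v\|\le\delta$, and set $L:=V^TP$. By (ii), $\|L-L^*\|=\|V^T(P-P^*_v)\|\le\|P-P^*_v\|\le\delta$, so $f_L(L)\ge f_L(L^*)=f(P^*_v)$. Since $VL=VV^TP$ is the centering of $P$ and therefore differs from $P$ only by a translation, the translation invariance of $f$ gives $f(P)=f(VL)=f_L(L)\ge f(P^*_v)$, so $P^*_v$ is a local minimizer of \cref{eq:F}; \Cref{th:ellL} then upgrades this to $P^*$, and conversely $P^*$ being a local minimizer forces $P^*_v$ to be one, closing the equivalence.

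The genuinely load-bearing step, and the only place a reader might stumble, is the ``only if'' direction: an arbitrary perturbation $P$ of $P^*_v$ need not be centered, so one must first project it onto the centered subspace before invoking the hypothesis on $f_L$, and one must verify that this projection does not enlarge the admissible neighborhood. That is precisely the contraction estimate $\|V^T\|\le 1$ from (ii). Everything else is routine bookkeeping with $V^TV=I_{n-1}$, $VV^T+\tfrac1n ee^T=I_n$, and the translation invariance of $f$ proved in \Cref{th:ellL}.
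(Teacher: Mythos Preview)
Your proof is correct and follows essentially the same approach as the paper's: both directions reduce to the isometry $\|VL_1-VL_2\|=\|L_1-L_2\|$ for the ``if'' part and to centering an arbitrary perturbation $P$ before invoking the hypothesis on $f_L$ for the ``only if'' part, with \Cref{th:ellL} handling the passage between $P^*$ and $P^*_v$. The only cosmetic difference is that where you invoke the contraction $\|V^TX\|\le\|X\|$, the paper writes out the orthogonal decomposition $P-P^*_v=V(L-L^*)+v^T\otimes e$ and applies Pythagoras to reach the same inequality.
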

	\begin{proof}
	 First, recall that $VV^T$ is the orthogonal projection onto $e^\perp$ and that the
		columns of $P_{v_*}$ are centered. Thus,~we have $VL_*=VV^TP_{v_*} = P_{v_*}$. 
		Sufficiency:	Let $P_{v_*}$ be a local minimizer of \cref{eq:F}. Then, there exists $\delta>0$ such that 
		\begin{equation}\label{eq:PP*}
			f(P_{v_{*}})\le f(P),\ \forall P \ : \ \|P-P_{v_{*}}\|_F\le \delta.
		\end{equation}
		For any $L\in\R^{(n-1)\times  p}$ such that $\|L-L_*\|_F\le\delta$, let $\hat{P} = VL$. Then, we have
		\begin{equation*}
		f_L(L_*)=f(VL_*) = f(P_{v_{*}}) \le f(\hat{P}) = f(VL)=f_L(L),
		\end{equation*}
		where the inequality is due to $\|\hat{P} - P_{v_{*}} \|_F= \|VL-VL_*\|_F=\|L-L_*\|_F\le\delta$ and \cref{eq:PP*}, and the equalities hold by the definition of $f_L$.
		\\Necessity: 
		Suppose $L_*$ is a local minimizer of $f_L(L)$, meaning there exists $\delta>0$ such that
		\begin{equation}\label{eq:11}f_L(L_*)\le f_L(L),\ \forall L \ : \ \|L-L_*\|_F\le \delta.
		\end{equation}
		For any configuration $P$ with $\|P-P_{v_{*}}\|_F\le \delta$, define its centroid $v=P^{T}e/n$. Then, there exists $L\in\R^{(n-1)\times d}$ such that the centered configuration can be expressed as 
$P=VL+ev^{T}$. This implies that $P-P_{v_{*}}=V(L-L_*)+ev^{T}$. As $V(L-L_*)$ and $ev^{T}$ are orthogonal, we get
		\begin{equation}\label{eq:deltakey}
		\|L-L_*\|_F^2=\|V(L-L_*)\|_F^2=\|P-P_{v_{*}}\|_F^2-\|ev^{T}\|_F^2\le \delta^2.
		\end{equation}
		Now, from \cref{eq:11} and \cref{eq:deltakey}, we have 
		\begin{equation*}
			f(P)=f(VL+ev^{T})=f(VL)\ge f(VL_*)=f(P_v{_*}),
		\end{equation*}
implying that $P_v{_*}$ is a local minimizer of $f(P)$. According to \Cref{th:ellL}, ${P_*}$ is also a local minimizer of $f(P)$.
	\end{proof}
From \Cref{th:eL}, we know that for the case of $d\ge 2$, if $L$ is a local minimizer of $f_L(L)$, then $\{LQ: Q \in \cO\}$ is a local minimizer of $f_L(L)$.
 This means that when $d\ge 2$, any local minimizer of $f_L(L)$ is
nonisolated and has a singular Hessian matrix.
	
	Next, we consider the correspondence between the local minimizers of \cref{eq:VL} and its rotation-reduced formulation \cref{eq:VLtriang}.
	
	\begin{theorem}\label{th:L*l}
		The following statements hold. 
		\begin{enumerate}
			\item
			If  $L_*$ is a local minimizer of $f_L$, then any $\ell_*$ satisfying \begin{equation}\label{eq:Lell}
				L_* = \LTriag(\ell_*)Q^T,
\end{equation}
for some $Q \in \cO$, is a local minimizer of $f_{\ell}$.
			\item
\label{item:fullcolrankL}
			\tred{
				If $\ell_*$ is a local minimizer of $f_{\ell}$, and the first $d$ rows of $\LTriag( \ell_*)$ are linearly independent, 
			then $L_* = \LTriag( \ell_*)$ is a local minimizer of $f_L$.}
		\end{enumerate}
	\end{theorem}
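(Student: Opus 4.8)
I would use three ingredients: $\LTriag$ is a linear isometry onto the subspace of lower-trapezoidal matrices (from $\LTriag^{*}\LTriag=I$ one gets $\|\LTriag(\ell)\|_{F}=\|\ell\|$), $f_{\ell}=f_{L}\circ\LTriag$ by definition, and $f_{L}$ is rotation invariant, $f_{L}(LQ)=f_{L}(L)$ for $Q\in\cO$ (equivalently \Cref{th:eL}). Given $L^{*}=\LTriag(\ell^{*})Q^{T}$ with $Q\in\cO$, I would first note $f_{\ell}(\ell^{*})=f_{L}(\LTriag(\ell^{*}))=f_{L}(L^{*})$. If $\delta>0$ is a radius on which $L^{*}$ minimizes $f_{L}$, then for $\|\ell-\ell^{*}\|\le\delta$ we have $\|\LTriag(\ell)Q^{T}-L^{*}\|_{F}=\|\LTriag(\ell)-\LTriag(\ell^{*})\|_{F}=\|\ell-\ell^{*}\|\le\delta$, hence $f_{\ell}(\ell)=f_{L}(\LTriag(\ell))=f_{L}(\LTriag(\ell)Q^{T})\ge f_{L}(L^{*})=f_{\ell}(\ell^{*})$, so $\ell^{*}$ is a local minimizer of $f_{\ell}$.

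\textbf{Plan for Part~2.} This is the substantive direction; the idea is to build a continuous local ``de-rotation'' that sends points $L$ near $L^{*}:=\LTriag(\ell^{*})$ to nearby feasible points $\ell(L)$ of the reduced problem. The hypothesis that the first $d$ rows of $L^{*}$ are linearly independent says that the leading $d\times d$ block of the lower-trapezoidal matrix $L^{*}$ is nonsingular, i.e.\ all its diagonal entries are nonzero (in particular $d\le n-1$). \emph{Step 1 (sign normalization):} since right multiplication by a diagonal $\pm1$ matrix $D\in\cO$ preserves the lower-trapezoidal pattern, $\ell\mapsto\LTriag^{*}(\LTriag(\ell)D)$ is an isometric linear bijection of $\R^{t_{\ell}}$ under which $f_{\ell}$ is invariant; choosing $D$ to flip the negative diagonal entries, I may assume WLOG that $L^{*}$ has strictly positive diagonal, recovering the original $\ell^{*}$ at the end through \Cref{th:eL} (its $\cO$-orbit contains $L^{*}$). \emph{Step 2 (local section by QR):} for $L$ near $L^{*}$, QR-factorize $L^{T}=Q_{L}R_{L}$ with $R_{L}$ upper trapezoidal and nonnegative diagonal, and set $\ell(L):=\LTriag^{*}(R_{L}^{T})$, so that $\LTriag(\ell(L))=R_{L}^{T}=LQ_{L}$. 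Because $L^{*T}$ is already upper trapezoidal with strictly positive diagonal, its normalized QR factorization is $L^{*T}=I\cdot L^{*T}$, and the pivots are nonzero; hence the normalized QR factorization is locally unique and depends real-analytically on the matrix near $L^{*T}$, so $L\mapsto(Q_{L},\ell(L))$ is continuous on a neighborhood $N$ of $L^{*}$ with $Q_{L^{*}}=I$, $\ell(L^{*})=\ell^{*}$. \emph{Step 3 (conclude):} for $L\in N$, rotation invariance gives $f_{L}(L)=f_{L}(\LTriag(\ell(L))Q_{L}^{T})=f_{L}(\LTriag(\ell(L)))=f_{\ell}(\ell(L))$; since $\ell(L)\to\ell^{*}$ as $L\to L^{*}$, shrinking $N$ and using that $\ell^{*}$ is a local minimizer of $f_{\ell}$ gives $f_{L}(L)=f_{\ell}(\ell(L))\ge f_{\ell}(\ell^{*})=f_{L}(L^{*})$.

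\textbf{Expected main obstacle.} The crux is Step~2: carrying out the passage to trapezoidal form continuously near $L^{*}$, and recognizing that it is exactly the stated rank condition -- not merely full column rank of $L^{*}$ -- that makes it work. The danger is that small perturbations of $L$ might require large rotations to return to trapezoidal form; nonvanishing of the QR pivots, which (because $L^{*}$ is already trapezoidal) is equivalent to invertibility of its leading $d\times d$ block, hence to linear independence of its first $d$ rows, is precisely what rules that out. A coordinate-free way to package the same fact: since $L^{*}$ has full column rank, the $\cO$-action $L\mapsto LQ$ is free near $L^{*}$ and its orbit is a smooth manifold; the rank condition is transversality of this orbit to the linear subspace of lower-trapezoidal matrices at $L^{*}$, making that subspace a local slice for the action and $\ell(\cdot)$ the associated smooth retraction. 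The Step~1 sign normalization is routine bookkeeping but has to be present so that $Q_{L^{*}}$ can be taken to be the identity.
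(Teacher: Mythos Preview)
Your Part~1 argument matches the paper's almost verbatim.

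For Part~2, your approach is correct but takes a genuinely different route from the paper's. The paper argues by contradiction and sequential compactness: assuming $L^*=\LTriag(\ell^*)$ is not a local minimizer of $f_L$, it picks a sequence $L_k\to L^*$ with $f_L(L_k)<f_L(L^*)$, QR-factorizes each $L_k^T=Q_kR_k$, extracts a convergent subsequence $Q_k\to Q^*$ by compactness of $\cO$, and then invokes the rank hypothesis only \emph{at the limit} to deduce that $Q^*$ must be a sign matrix $\Lambda$; setting $\ell_k:=\LTriag^*(R_k^T\Lambda)$ gives $\ell_k\to\ell^*$ with $f_\ell(\ell_k)<f_\ell(\ell^*)$, a contradiction. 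You instead work directly, building a continuous local section $L\mapsto\ell(L)$ by appealing to smooth dependence of the positive-diagonal QR factorization on its argument (after a preliminary sign normalization so that $Q_{L^*}=I$). Your geometric reading---the lower-trapezoidal subspace as a local slice for the free $\cO$-action, with the rank hypothesis being exactly transversality---is illuminating and explains why the hypothesis is sharp. The trade-off: the paper's argument is a touch more elementary in that it needs only uniqueness of QR at the single point $L^{*T}$ rather than continuity of QR on a neighborhood, while yours is constructive and yields an explicit neighborhood on which the local minimality inequality holds.
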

	\begin{proof}
		\begin{enumerate}
			\item
			Suppose $L_*$ is a local minimizer of $f_L$, meaning there exists $r>0$ such that
			\begin{equation}\label{eq:lL}
				f_L(L)\ge f_L(L_*),\ \forall  L\ :\ \|L- L_*\|_F\le r.
			\end{equation}
For any $\ell\in\R^{t_\ell}$ satisfying $\|\ell-\ell_*\|\le r$,
we let $L = \LTriag(\ell) Q^T$ and we have
			$$\| L - L_* \|_F = \|\LTriag(\ell)-\LTriag ( \ell_*)\|_F=\|\ell-\ell_*\|\le r.$$
			Then, from \cref{eq:Lell} and \cref{eq:lL} 
			we have 
			\[f_\ell(\ell)=f_L(\LTriag(\ell)) = f_L(L) \ge f_L(L_*)=f_L(\LTriag( \ell_*))=f_\ell( \ell_*).\]
			Therefore, $\ell_*$ is a local minimizer of $f_{\ell}$.
			
			\item
			We prove \Cref{item:fullcolrankL} by
contradiction. Suppose \tred{$ L_*=\LTriag( \ell_*)$} is not a
local minimizer of $f_L$. Then there exists a sequence $L_k, k=1, 2,
\ldots$ such that
			\begin{equation}\label{eq:nlL}
				\lim_{k\rightarrow +\infty}L_k=L_*,\ f_L(L_k)< f_L( L_*).
			\end{equation}
Consider the QR decompositions of $L_k^T, k=1,
2, \ldots$, i.e.,~there exist $Q_k\in \cO, k=1, 2, \ldots$, and
upper triangular matrices $R_k, k=1, 2, \ldots$, such that
			\begin{equation}\label{QRLk}
				L_k^T=Q_kR_k, k=1, 2, \ldots.
			\end{equation}
			Since $\|Q_k\|_2 = 1$ for all $k=1, 2,
\ldots$, the bounded sequence $\{Q_k\}$ has a convergent
			subsequence. Without loss of generality, we directly assume that $\lim_{k\rightarrow +\infty}Q_{k}=Q_*$. 
According to \cref{eq:nlL} and \Cref{QRLk}, we have 
\begin{equation}\label{eq:inf}
R_*^{T}:=L_* Q_*=\lim_{k\rightarrow +\infty}L_kQ_k=\lim_{k\rightarrow +\infty}R_k^T. 
\end{equation}
			By \cref{eq:inf}, $R_*^{T}$ is a triangular matrix. Since the first $d$ rows of $L_*=\LTriag( \ell_*)$ are linearly independent, the QR factorization of $L_*^{T}$ is unique  except for signs in each dimension. Thus, $Q_*$ is a diagonal matrix with diagonal elements being $-1$ or $1$. Let $$\ell_k=\LTriag^*(R_k^TQ_*^{T}), k=1, 2, \ldots.$$ By \cref{eq:inf},
			we get $$\lim_{k\rightarrow +\infty}\ell_k=\ell_*.$$
			By \cref{eq:nlL}, we have
			\begin{align*}
			f_\ell(\ell_k)=f_L(R_k^T)=f_L(L_k) & < f_L( L_*) = f_L(R_*^{T} Q_*^{T}) \\ & =f_L(R_*^{T})=f_\ell( \ell_*).
			\end{align*}
			Thus, $\ell_*$ is not a local minimizer of $f_{\ell}$, a
			contradiction.

		\end{enumerate}
	\end{proof}
	
	The optimality conditions of  \cref{eq:F} and \cref{eq:VL}  also have
	an equivalence relationship. To this end, we first note that the
	directional derivatives of $f(P)$ are zero in any translation.
	\begin{lemma}\label{le:12}
		For any $v\in\R^d$, we have 
$$\langle f'(P), ev^T
				\rangle =0  {\rm\ and\ }
f''(P)(ev^T, ev^T)=0. $$
	\end{lemma}	
	\begin{proof}
		For $t\in\R$, we have 
		\begin{equation*}
			\begin{array}{rcl}
				f(P+tev^T)&=&f(P)+t\langle f'(P), ev^T
				\rangle +\frac{t^2}{2}\langle  f''(P)(ev^T), ev^T
				\rangle+o(t^2).
			\end{array}	
		\end{equation*}
		Since $f(P+tev^T)=f(P)$ holds for all $v\in\R^d$ and $t\in\R$, we get 
		$$\langle f'(P), ev^T
		\rangle =\langle  f''(P)(ev^T), ev^T
		\rangle=0.$$
Moreover, we claim that 
\begin{equation}\label{eq:fprpre}
	f''(P)(ev^T)=0.
\end{equation}
According to \cref{eq:2ndorderHess} and \cref{eq:dJ}, we have
\begin{eqnarray}
%	f''(P)(ev^T)&=&4\left[J^* J \right] + 2\left[\kvec \cK^*F(P)\Mat \right]\nonumber\\ % there is no \kvec(ev^T) in the RHS of this line 
		f''(P)(ev^T)&=&4\left[J^* J \right]\kvec(ev^T) + 2\left[\kvec \cK^*F(P)\Mat \right]\kvec(ev^T).\nonumber
\end{eqnarray}
Since $\nul(\cK) = \range(S_e)$ and $\range(\cK^*) = \{S\in\Sn:\ Se=0\}$ in \Cref{item:cK,item:LaplKsNinvar} of
\Cref{lem:adjderivs}, we have
$$J(\kvec(ev^T))=\cK\left(\frac{Pve^T+ev^TP^T}{2}\right)=0,\
\cK^*F(P)(ev^T)=0.$$ Thus, \cref{eq:fprpre} holds.
\end{proof}

	\begin{theorem}\label{th:equi}For $P\in\R^{n\times d}$, denote $v=(P^T e)/n\in\R^d$, $P_v=P-ev^T$,  $L=V^TP_v$ where $V$ is defined in \cref{eq:Vmatr}, denote $L^T=QR$ where $R$ is upper triangular and $Q\in\R^{d\times d}$ is orthogonal. Then,
		the following are equivalent:
		\begin{itemize}
			\item[{\rm(i)}] the first (resp., second)-order necessary conditions of \cref{eq:F} hold at $P$; 
			\item[{\rm(ii)}] the first (resp., second)-order necessary conditions of \cref{eq:F} hold at $P_v$; 
			\item[{\rm(iii)}]  the first (resp., second)-order necessary conditions of \cref{eq:VL} hold at $L$; 
			\item[{\rm(iv)}]  the first (resp., second)-order necessary conditions of \cref{eq:VL} hold at $R^T$.
		\end{itemize}
	\end{theorem}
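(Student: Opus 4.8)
The plan is to establish the chain of equivalences $\text{(i)}\Leftrightarrow\text{(ii)}\Leftrightarrow\text{(iii)}\Leftrightarrow\text{(iv)}$, treating the first- and second-order statements in parallel and leaning on the invariance results already proved. The equivalence $\text{(i)}\Leftrightarrow\text{(ii)}$ is essentially free: since $P = P_v + v^T\otimes e$ and $f$ is translation invariant, $f(P+\Delta P) = f(P_v + \Delta P)$ for every $\Delta P\in\Rnd$, so the first and second Fr\'echet derivatives of $f$ at $P$ and at $P_v$ coincide identically; hence \cref{eq:1opt} and \cref{eq:2opt} hold at $P$ if, and only if, they hold at $P_v$.

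For $\text{(ii)}\Leftrightarrow\text{(iii)}$ I would use $P_v = VL$ --- valid because $P_v$ is centered, so $VV^TP_v = P_v$ and $V^TP_v = L$ --- together with \Cref{pro:fL''}, which gives $f'_L(L) = V^Tf'(P_v)$ and $f''_L(L) = V^Tf''(P_v)V$. For the first-order part, from \cref{eq:1opt} we have $f'(P_v) = 4\cK^*(F(P_v))P_v$, and \Cref{item:LaplKsNinvar} of \Cref{lem:adjderivs} gives $\cK^*(F(P_v))e = 0$, hence $e^Tf'(P_v) = 0$; thus the columns of $f'(P_v)$ lie in $e^\perp = \range V$, and since $VV^T$ is the orthogonal projector onto $e^\perp$ we get $f'(P_v) = Vf'_L(L)$. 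As $V$ has full column rank, $f'(P_v) = 0 \Leftrightarrow f'_L(L) = 0$. For the second-order part, one direction is immediate since $V^Tf''(P_v)V\succeq 0$ whenever $f''(P_v)\succeq 0$. For the converse I would decompose an arbitrary $\Delta P\in\Rnd$ as $\Delta P = VM + ew^T$ with $M = V^T\Delta P$ and $w = \frac1n\Delta P^Te$; by \Cref{le:12} the self-adjoint operator $f''(P_v)$ annihilates $ew^T$, so the cross term and the $ew^T$-term drop out of the expansion of $f''(P_v)(\Delta P,\Delta P)$, leaving $f''(P_v)(\Delta P,\Delta P) = f''(P_v)(VM,VM) = f''_L(L)(M,M)$, which is nonnegative once $f''_L(L)\succeq 0$. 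Hence $f''(P_v)\succeq 0\Leftrightarrow f''_L(L)\succeq 0$.

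For $\text{(iii)}\Leftrightarrow\text{(iv)}$ I would invoke rotation invariance $f_L(LQ) = f_L(L)$, $Q\in\cO$, together with $R^T = LQ$ (which follows from $L^T = QR$). Differentiating $L\mapsto f_L(LQ)$ gives $f'_L(L) = f'_L(LQ)Q^T$ and $f''_L(L)(\Delta L,\Delta L) = f''_L(LQ)(\Delta L\, Q,\Delta L\, Q)$; since $Q$ is orthogonal (so $Q^T$ is invertible and right-multiplication by $Q$ is an isometry of $\Rnod$), the first-order condition holds at $L$ iff it holds at $R^T$, and $f''_L(L)\succeq 0\Leftrightarrow f''_L(R^T)\succeq 0$. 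Here condition (iv) is read at the point $R^T\in\Rnod$ associated with the upper-triangular $R$.

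The step I expect to be the main obstacle is the converse half of the second-order part of $\text{(ii)}\Leftrightarrow\text{(iii)}$: lifting positive semidefiniteness of the compressed Hessian $V^Tf''(P_v)V$ back to positive semidefiniteness of $f''(P_v)$ on all of $\Rnd$. This is exactly where the degeneracy of $f$ along translations, captured by \Cref{le:12}, is essential; without the fact that $f''(P_v)$ kills the translation subspace, restricting the curvature test to $\range V$ would lose information and the implication would fail.
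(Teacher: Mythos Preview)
Your proposal is correct and follows essentially the same route as the paper's own proof: translation invariance for $\text{(i)}\Leftrightarrow\text{(ii)}$, the identity $e^Tf'(P_v)=0$ (from $\range(\cK^*)=\SC$) together with \Cref{pro:fL''} for the first-order part of $\text{(ii)}\Leftrightarrow\text{(iii)}$, \Cref{le:12} to kill the translation directions in the second-order part, and rotation invariance for $\text{(iii)}\Leftrightarrow\text{(iv)}$. Your explicit decomposition $\Delta P=VM+ew^T$ and your remark that condition~(iv) is to be read at $R^T\in\Rnod$ make the argument slightly cleaner than the paper's presentation, but the substance is identical.
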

	\begin{proof}
	Since
		 \begin{eqnarray}
			&&f(P+t\Delta P)	=f(P)+t\langle f'(P), \Delta P \rangle +\frac{t^2}{2}\langle  f''(P)(\Delta P), \Delta P\rangle+o(t^2)\nonumber\\
			=&& f(P_v+t\Delta P)=f(P_v)+t\langle f'(P_v), \Delta P \rangle +\frac{t^2}{2}\langle  f''(P_v)(\Delta P), \Delta P\rangle+o(t^2)\nonumber
		\end{eqnarray}
	holds for all $\Delta P\in\R^{n\times d}$ and $t\in\R$, we have
		\begin{equation}\label{eq:f'Pv}
			f'(P_v)=f'(P),\ f''(P_v)=f''(P).
		\end{equation} 
By
\begin{eqnarray}
	&&f_L(L+t\Delta L)	=f_L(L)+t\langle f_L'(L), \Delta L \rangle +\frac{t^2}{2}\langle  f_L''(L)(\Delta L), \Delta L\rangle+o(t^2)\nonumber\\
	=&& f_L(R^T+t\Delta LQ)=f_L(R^T)+t\langle f_L'(R^T), \Delta LQ \rangle +\frac{t^2}{2}\langle  f_L''(R^T)(\Delta LQ), \Delta LQ\rangle+o(t^2),\nonumber
\end{eqnarray}
 we have
$$f_L'(R^T)=0\Leftrightarrow f_L'(L)=0,\ f_L''(R^T)\succeq
0\Leftrightarrow f_L''(L)\succeq 0.\footnote{Note that $f_L''(L)$ is a
positive semidefinite linear operator on $\R^{(n-1)\times d}$.}
$$
Thus, (i)$\Leftrightarrow$(ii) and (iii)$\Leftrightarrow$(iv). 

Now we prove (ii)$\Leftrightarrow$(iii). First, we prove the equivalence
of their first-order necessary conditions. According to \cref{eq:rev1}
		and $\range(\cK^*) =     %\cref{eq:1storderHess} 
\SC$ (\Cref{lem:adjderivs},~\Cref{item:LaplKsNinvar}),
		we have
		\begin{equation}\label{eq:e'f'}
			e^Tf'(P)=2e^T\cK^*(F(P))P=0.
		\end{equation} 
By \Cref{pro:fL''}, \cref{eq:f'Pv}, \cref{eq:e'f'}, the definition of $V$, and \Cref{item:LaplKsNinvar} of \Cref{lem:adjderivs}, we obtain
 $$f'(P_v)=0\Longleftrightarrow f'_L(L)=V^Tf'(P_v)=0.$$
	Secondly, we prove the equivalence of their second-order necessary optimality conditions. 
		According to \cref{eq:2optfv0}, for any $\Delta
L\in\R^{(n-1)\times d}$, we have \begin{equation}\label{eq:fprprL}
			f''_L(L)(\Delta L, \Delta L)= V^Tf''(VL)V(\Delta L, \Delta L)=f''(VL)(V\Delta L, V\Delta L).
		\end{equation}
		According to \cref{eq:fprpre} in \Cref{le:12} and
\cref{eq:fprprL}, we have $f''_L(L)(\Delta L, \Delta L)\ge 0$ if, and
only if, 
		$$ f''(P_v)(\Delta P, \Delta P)=f''(P_v)(V\Delta L+ev^T, V\Delta L+ev^T)\ge 0.$$
(Note that we have proved a slightly stronger statement as the
semidefinite condition is treated separately from stationarity.)
	\end{proof}
	\begin{remark}
		The reduction from \cref{eq:VL} to \cref{eq:VLtriang}
may introduce additional stationary points. 
		Let $\LTriag(\ell_*)=R_*^{T}$. According to \cref{eq:1optfl},
		$f'_\ell(\ell)=0$ holds if, and only if, the lower
triangular part of $f'_L(R_*^{T})$  is zero. Moreover, to have a local minimizer correspondence, the assumption that the first $d$ rows of $R_*^{T}$ is
linear independent in \Cref{th:L*l},~\Cref{item:fullcolrankL} is needed.

	\end{remark}
	
	\section{Second-Order {Optimality} Conditions}
	\label{sect:optcondsens}
	In this section,
	we present the optimality conditions and derive a sufficient condition such that there is no \lngmp.
	First of all, the necessary and sufficient characterization for the 
	global minimizer is clear.
	\begin{lemma}\label{le:gOpt}
		A matrix $P \in \Rnd$ is a {\bf global minimizer} of \cref{eq:F} if, and only if, $\mathcal{D}(P)=\bar D$.
	\end{lemma}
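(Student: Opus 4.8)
The statement is essentially immediate once one observes that the objective $f$ is a (scaled) squared norm, hence nonnegative, and that the prescribed data $\bar D$ is attainable. First I would recall from \cref{eq:F} and the definition of $\cD$ that
\[
f(P) = \tfrac 12 \|\cK(PP^T) - \bar D\|_F^2 = \tfrac 12 \|\cD(P) - \bar D\|_F^2 \ge 0
\quad\text{for every } P\in\Rnd,
\]
so that $f$ is bounded below by $0$.

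Next I would exhibit a point achieving this lower bound: by construction in \cref{eq:Dbardata}, $\bar D = \cK(\bar G) = \cK(\bar P\bar P^T) = \cD(\bar P)$, hence $f(\bar P) = 0$. Consequently the optimal value of \cref{eq:F} is exactly $0$, and $P$ is a global minimizer if and only if $f(P) = 0$.

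Finally, since the Frobenius norm is definite, $f(P) = 0$ holds if and only if $\cD(P) - \bar D = 0$, i.e. $\cD(P) = \bar D$. Chaining the two equivalences gives the claim. I do not anticipate any real obstacle here: the only thing to be careful about is recording that $\bar D$ is a bona fide EDM arising from $\bar P$ (so that the global value is $0$ rather than merely some unknown infimum), which is exactly what \cref{eq:Dbardata} guarantees.
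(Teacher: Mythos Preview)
Your proof is correct and follows essentially the same route as the paper: observe $f\ge 0$, note $f(\bar P)=0$ from \cref{eq:Dbardata}, and use definiteness of the Frobenius norm to conclude that $f(P)=0$ iff $\cD(P)=\bar D$.
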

	\begin{proof}
		Since $f(P)\ge0$ holds for all $P\in\Rnd$ and $f(\bar P)=0$, the
		global minimum of $f$ is $0$. By the definition of $f$ and
		property of norms , $f(P)=0$ holds if, and only if, $F(P)=\mathcal{D}(P)-\bar D=0$. 
	\end{proof}
%	We note that $H_1$ and $H_2$ have the following interesting properties. 
In order to further characterize the second-order optimality conditions, we discuss essential properties of the matrices $H_1$ and $H_2$. 
	\begin{lemma}\label{le:H12succ}
		The matrix $H_1$ defined in \cref{eq:H12} 
		is always positive semidefinite. For $H_2$, the following holds:
		\begin{itemize}
		\item $H_2\succeq 0$ when  $F(P)$ is element-wise nonnegative,
		\item $H_2\preceq 0$ when $F(P)$ is element-wise nonpositive.
	\end{itemize} 
	\end{lemma}
	
	\begin{proof}
		For any $x\in\R^{nd}$, 
		$$x^TH_1x=\langle x, J^*Jx\rangle=\langle Jx, Jx\rangle\ge 0.$$
		Thus, $H_1$ is always positive semidefinite.
		By \Cref{lem:adjderivs},~\Cref{item:LaplKsNinvar}, 
		if $F(P)\ge(\le)\ 0$, then $\cK^*(F(P))\succeq(\preceq)\ 0$, which implies
		$$x^TH_2x=\langle x, \Mat^* \cK^*F(P)\Mat x\rangle=\langle \Mat x, \cK^*(F(P))\Mat x\rangle\ge(\le)\ 0$$
	for all $x\in\R^{nd}$. Thus, $H_2\succeq(\preceq)\ 0$  if $F(P)\ge(\le)\ 0$. 
	\end{proof}
	\begin{lemma}\label{le:H2}
		The matrix $H_2$ is the zero matrix if, and only if, $F(P)=0$ holds, which is equivalent to $P$ being a global minimizer of \cref{eq:F}.
	\end{lemma}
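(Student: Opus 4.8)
The plan is to peel the statement down to its essential content: the large matrix $H_2\in\mathbb{S}^{nd}$ vanishes exactly when the small matrix $\cK^*(F(P))\in\Sn$ vanishes, and the latter is controlled by the explicit null space of $\cK^*$ recorded in \Cref{lem:adjderivs}. First I would recall, from the computation leading to \cref{eq:secondterm} and the definition in \cref{eq:H12}, that $H_2$ is precisely the symmetric matrix representing the quadratic form $\Delta P\mapsto\langle\Delta P,\cK^*(F(P))\Delta P\rangle$ on $\Rnd$. Since a symmetric matrix is zero if, and only if, its associated quadratic form is identically zero, $H_2=0$ is equivalent to $\langle\Delta P,\cK^*(F(P))\Delta P\rangle=0$ for all $\Delta P\in\Rnd$; testing this on $\Delta P=ve_k^T$ with $v\in\Rn$ arbitrary and $e_k$ a standard basis vector of $\Rd$ yields $v^T\cK^*(F(P))v=0$ for every $v$, hence, again by symmetry of $\cK^*(F(P))$, we get $H_2=0\iff\cK^*(F(P))=0$.

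It then remains to show $\cK^*(F(P))=0\iff F(P)=0$. The implication $\Leftarrow$ is trivial. For $\Rightarrow$, note that $\cK^*(F(P))=0$ means $F(P)\in\nul(\cK^*)$, and by \Cref{lem:adjderivs},~\Cref{item:LaplKsNinvar} we have $\nul(\cK^*)=\Diag(\Rn)$, the subspace of diagonal matrices. On the other hand $F(P)=\cD(P)-\bar D\in\SH$ lies in the hollow subspace, i.e.\ has zero diagonal, by the definition of $F$ in \Cref{prob:mainptrecover} (both $\cD(P)$ and $\bar D$ are hollow). Since the only diagonal matrix with zero diagonal is the zero matrix, $F(P)\in\Diag(\Rn)\cap\SH=\{0\}$, so $F(P)=0$. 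Combining, $H_2=0\iff F(P)=0$, and by \Cref{le:gOpt} this is in turn equivalent to $\cD(P)=\bar D$, i.e.\ to $P$ being a global minimizer of \Cref{eq:F}.

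There is essentially no serious obstacle here: the only point requiring a touch of care is the first reduction, namely that $H_2=0$ is genuinely equivalent to (not merely a consequence of) $\cK^*(F(P))=0$; this is handled by the basis-vector test above, after which the argument is a one-line application of the null space description of $\cK^*$ together with the hollowness of $F(P)$, and the link to global minimality is exactly \Cref{le:gOpt}.
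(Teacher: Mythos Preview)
Your proof is correct and follows essentially the same route as the paper: both arguments use that $\nul(\cK^*)=\Diag(\Rn)$ from \Cref{lem:adjderivs},~\Cref{item:LaplKsNinvar} together with the hollowness of $F(P)$ to conclude $\cK^*(F(P))=0\iff F(P)=0$. The paper's proof is terser, treating the first reduction $H_2=0\iff\cK^*(F(P))=0$ as immediate from the definition of $H_2$ and omitting the explicit appeal to \Cref{le:gOpt}, whereas you spell both out; this is extra care rather than a different idea.
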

	\begin{proof}
		By \Cref{lem:adjderivs},~\Cref{item:LaplKsNinvar}, $\cK^*(S)=2(\Diag (Se)-S)$ and $\nul(\KK^*) = \Diag(\Rn)$. Since
		$\diag(F(P))=\diag(\mathcal{D}(P))-\diag(\bar D)=0$ is
always true, $\cK^*(F(P))=0$
		holds if, and only if, $F(P)=0$.
	\end{proof}

	\begin{lemma}\label{le:pz}
	{Let $\bar{P}$, with $\bar{P}^T e = 0$, be a global minimizer of \cref{eq:F}.}
		Suppose that $P$ is a stationary point for \cref{eq:F}
but is not a global optimizer. Then $H_2$ is not positive semidefinite. Specifically,
		{ $ \kvec(\bar P)^TH_2\kvec(\bar P)<0$.  }
	\end{lemma}
	\begin{proof}
		By \cref{eq:1opt}, we have $\langle P, \cK^*F(P)P\rangle$=0, and then
		\begin{equation*}
			\begin{array}{rcl}
				{  \kvec(\bar P)^TH_2\kvec(\bar P) }
				&=&\langle \bar P, \cK^*F(P) \bar P\rangle-\langle P, \cK^*F(P)P\rangle\\
				&=&\langle \cK(\bar P  \bar P^T), F(P)\rangle-\langle \cK(P P^T), F(P)\rangle\\
				&=&\langle \cK(\bar P  \bar P^T)- \cK(P P^T), F(P)\rangle\\
				&=&\langle \bar D- \cD(P), F(P)\rangle\\
				&=&-\langle F(P), F(P)\rangle\\
				&<&0.
			\end{array}
		\end{equation*}
		The last inequality holds because $P$ is not a global minimizer, which implies $F(P)\neq 0$ according to \Cref{le:gOpt}.
	\end{proof}
	Under the condition of \Cref{le:pz}, we have known that $$\langle \bar P, \cK^*F(P) \bar P\rangle<0,$$
	which implies that
	\begin{equation}\label{eq:cKF}
		\cK^*F(P)\nsucceq 0.
	\end{equation}
	
	We analyze the extreme case of $\bar D=0$.
	\begin{corollary}\label{cor:D0}
		If $\bar D=0$, then every stationary point is a global minimizer.
	\end{corollary}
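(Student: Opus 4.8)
The plan is to obtain the result as an immediate consequence of \Cref{le:pz}. The first step is to notice that the hypothesis $\bar D=0$ makes the data configuration trivial. Indeed, $\bar D_{ij}=\|\bar p_i-\bar p_j\|^2$, so $\bar D=0$ forces all points $\bar p_i$ to coincide; combined with the standing centering assumption $\bar P^Te=0$ this gives $\bar p_i=0$ for every $i$, i.e.\ $\bar P=0$ and hence $\bar p:=\kvec(\bar P)=0$. (Equivalently, one can argue at the Gram-matrix level: $\bar D=\cK(\bar G)=0$ with $\bar G\in\SC$ forces $\bar G\in\nul(\cK)\cap\SC=\range(S_e)\cap\SC=\{0\}$, so $\bar G=\bar P\bar P^T=0$ and $\bar P=0$.)

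With this in hand, the argument is by contradiction. Suppose $P$ is a stationary point of \Cref{eq:F} that is \emph{not} a global minimizer. Then \Cref{le:pz} applies and yields $\bar p^T H_2\bar p<0$. But $\bar p=0$, so $\bar p^T H_2\bar p=0$, a contradiction. Hence no such $P$ exists; that is, every stationary point is a global minimizer, which is the claim.

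As a cross-check, the same conclusion can be reached by a one-line direct computation without invoking \Cref{le:pz}: when $\bar D=0$ we have $F(P)=\cK(PP^T)$, and stationarity \Cref{eq:1opt} gives $\cK^*(F(P))P=0$; taking the trace inner product of both sides with $P$ and using that $\cK^*$ is the adjoint of $\cK$,
\[
0=\langle P,\cK^*(F(P))P\rangle=\langle PP^T,\cK^*\cK(PP^T)\rangle=\|\cK(PP^T)\|^2=\|F(P)\|^2=2f(P),
\]
so $f(P)=0$ and $P$ is a global minimizer by \Cref{le:gOpt}.

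The computation is entirely routine; the only point requiring a little care is the very first reduction $\bar D=0\Rightarrow\bar P=0$, where the centering hypothesis $\bar P^Te=0$ is genuinely used (without it any constant configuration would be a global minimizer, $\bar p$ would not vanish, and \Cref{le:pz} could not be trivialized in this way). Once that observation is made, there is no real obstacle.
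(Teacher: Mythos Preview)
Your main argument is correct and follows the same outline as the paper: contrapose \Cref{le:pz} by showing $\bar p^T H_2\bar p=0$. The only difference is how that vanishing is obtained. You invoke the standing centering assumption $\bar P^Te=0$ to force $\bar P=0$, whence $\bar p=0$ trivially. The paper instead observes that $\cK^*F(P)$ is a Laplacian (columns sum to zero), so $\cK^*F(P)\bar P=0$ for \emph{any} constant-row $\bar P$, and then uses the first-order condition to kill the $\langle P,\cK^*F(P)P\rangle$ term; this avoids appealing to centering at all. Both are valid under the paper's hypotheses; the paper's version is marginally more robust. Your alternative one-line computation, pairing stationarity with $P$ to get $\|\cK(PP^T)\|^2=0$ directly, is a genuinely different and more elementary route that bypasses \Cref{le:pz} and the matrix $H_2$ entirely; the paper does not give this argument.
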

	\begin{proof}
Suppose $P$ is a stationary point. As $\bar D=0$, we get $\bar p_1=\ldots=\bar p_n$ holds. 
		Since $\cK^*F(P)$ is a Laplacian (sum of its columns is zero), we have $\cK^*F(P)\bar P=0$.
		Combining this with the first-order condition~\cref{eq:1opt},
		we have 
		\begin{equation*}
			\begin{array}{rcl}
				{  \kvec(\bar P)^TH_2\kvec(\bar P) }
				&=&\langle \bar P, \cK^*(F(P)) \bar P\rangle-\langle P, \cK^*(F(P))P\rangle\\
				&=&0.
			\end{array}
		\end{equation*}
		From \Cref{le:pz}, we conclude that any stationary point $P$ for $f(P)$ is a global minimizer.
	\end{proof}

	Next, we consider another extreme case of $\bar D\neq 0, \cD(P)=0$.
	\begin{theorem}\label{th:4.6}
For $\bar D\neq 0$ and $P$ with $\cD(P)=0$ (i.e., all $p_i \equiv p$), $P$ is a stationary point and has nontrivial negative semidefinite Hessian:
\[
0\neq4H_1+2H_2\preceq 0.
\]
	\end{theorem}
	\begin{proof}
		Since $p_1=\ldots = p_n=p$ and $\cK^*(F(P))$ is a Laplacian, $P$ satisfies the first-order optimality condition \cref{eq:1opt}.
		Since $f(P)={\|\bar D- \cD(P)\|}_F^2={\|\bar D\|}_F^2>0$, $P$ is not a global minimizer. 
		By $$P\Delta P^T+\Delta PP^T=ep^T\Delta P^T+\Delta P pe^T=e(\Delta Pp)^T+(\Delta P p)e^T,$$
and $\nul(\cK) = \range(S_e)$ (\Cref{lem:adjderivs}, \Cref{item:cK}), $J=0$ defined in \cref{eq:dJ} holds, and then $H_1=0$. 
		Since $\cD(P)=0$ and $\bar D\ge 0$, $F(P)\le 0$ holds.
		According to \Cref{le:H12succ} and  \Cref{le:H2}, $0\neq
H_2\preceq 0$ holds. Therefore, the Hessian matrix satisfies
$0\neq4H_1+2H_2\preceq 0$. 
	\end{proof}
	
	\begin{remark}
If $P$ is a local {\bf
maximizer} of $f$, then necessarily $\cD(P)=0$ ($p_1=\cdots=p_n$). 
To see this, observe that $t = 1$ must locally maximize $g(t) = f(tP)$. 
		By the second-order necessary conditions, we have
$g'(1) = 0$ and $g''(1) \leq 0$. 
		Since $g'(t) = 4 t^3 \| \cD(P) \|_F^2 - 4 t \langle \bar D , \cD(P) \rangle$, 
		$0 = g'(1)$ implies that $\| \cD(P) \|_F^2 = \langle \bar D , \cD(P) \rangle$.  
		Then, $0 \geq g''(1) = 8 \| \cD(P) \|_F^2$ and we conclude that $\cD(P) = 0$.
	\end{remark}
	
	In the following, we present the condition under which there is no
	\lngm. Recall the equivalence between local minimizers of \cref{eq:VL}
	and \cref{eq:F} in \Cref{th:equi2}, we  analyze \cref{eq:VL} for convenience.
	\begin{theorem}\label{th:nd1}
		Any stationary point $L$ of \cref{eq:VL} satisfying ${\rm rank}(L)= n-1$ is a global minimizer.
	\end{theorem}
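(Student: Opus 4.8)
The plan is to show directly that such a stationary point $L$ has optimal value $f_L(L)=0$; since $f_L\ge 0$ on $\Rnod$, this already makes $L$ a global minimizer of \Cref{eq:VL} (and, through the equivalence in \Cref{th:equi2}, transfers the conclusion to the original problem \Cref{eq:F}). The structural fact that does the work is that $\cK^*(F(P))$ is always a Laplacian, i.e.,~$\cK^*(F(P))\in\SC$, so it vanishes on $\spn\{e\}$ and only ``lives'' on the $(n-1)$-dimensional subspace $e^\perp$; the rank hypothesis on $L$ is exactly what forces it to vanish there as well.

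Concretely, I would set $P=VL$, so that $P$ is centered, $P^Te=0$, and $\operatorname{rank}(P)=\operatorname{rank}(L)=n-1$ since $V$ has full column rank. By \Cref{pro:fL''}, \eqref{eq:1optfv0}, and \eqref{eq:1opt},
\[
0 \;=\; f_L'(L) \;=\; V^Tf'(P) \;=\; 4\,V^T\cK^*(F(P))\,VL .
\]
Next I would use $\cK^*(F(P))e=0$ and, by symmetry, $e^T\cK^*(F(P))=0$ (\Cref{lem:adjderivs},~\Cref{item:LaplKsNinvar}); together with $VV^T+\tfrac1n ee^T=I_n$ this gives
\[
\cK^*(F(P)) \;=\; V N V^T, \qquad N := V^T\cK^*(F(P))V \in \Sc^{n-1},
\]
so the stationarity condition collapses to $NL=0$. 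Since $\operatorname{rank}(L)=n-1$ means $L$ has full row rank, $LL^T\succ 0$ is invertible, whence $N=(NL)L^T(LL^T)^{-1}=0$, and therefore $\cK^*(F(P))=VNV^T=0$. Finally, \Cref{le:H2} (equivalently, \Cref{item:LaplKsNinvar} of \Cref{lem:adjderivs} together with $\diag F(P)=0$) yields $F(P)=0$, i.e.,~$\cD(P)=\bar D$, so $f_L(L)=\tfrac12\|F(P)\|_F^2=0$, as claimed.

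An essentially equivalent route avoids the $N$-reduction: by \Cref{th:equi}, $L$ stationary for \Cref{eq:VL} makes $P=VL$ stationary for \Cref{eq:F}, i.e.,~$\cK^*(F(P))P=0$; then $\nul(\cK^*(F(P)))$ contains $\range(P)$ (dimension $n-1$) and also $e$, and $e\notin\range(P)$ because $P$ is centered, so $\nul(\cK^*(F(P)))$ has dimension $n$ and $\cK^*(F(P))=0$. I do not expect a genuine obstacle here: once the first-order condition is pushed through \Cref{pro:fL''} and the Laplacian structure of $\cK^*$ is used, the argument is short. The only points requiring care are (i) the hypothesis $\operatorname{rank}(L)=n-1$ is vacuous unless $n-1\le d$, so this is really a statement about the regime $n\le d+1$, and (ii) one should invoke \Cref{th:equi2} to restate the result for \Cref{eq:F}.
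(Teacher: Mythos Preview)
Your proposal is correct and follows essentially the same route as the paper: use stationarity to obtain $V^T\cK^*(F(VL))V\,L=0$, invoke $\rank(L)=n-1$ to force $V^T\cK^*(F(VL))V=0$, then use $\range(\cK^*)=\SC$ (the Laplacian structure) to conclude $\cK^*(F(P))=0$ and hence $F(P)=0$ via \Cref{le:H2}. Your write-up is a bit more explicit (the $N=VNV^T$ reduction and the right-inverse identity), and your alternative ``null-space dimension count'' variant is the same argument phrased in $\Rn$ rather than in $\R^{n-1}$, but neither adds or removes any substantive step compared to the paper's proof.
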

	\begin{proof}
		Since  $0 = f'_L(L) = 2V^T\cK^*F(VL)VL$, where the last equality follows from \cref{eq:1optfv0} and \cref{eq:rev1}, 
		the span of columns of $L$ is an $n-1$ dimensional
eigenvector space corresponding to the zero eigenvalue of the
$(n-1)\times (n-1)$ matrix $V^T\cK^*F(VL)V$.
Therefore $V^T\cK^*(F(P))V=0$. Combining this with $\range(\cK^*)=\SC$
from \Cref{lem:adjderivs},~\Cref{item:LaplKsNinvar}, we conclude that
$\cK^*(F(P))=0$, and then $H_2=0$. Thus, $L$ is a global minimizer according to \Cref{le:H2}. 
	\end{proof}
	As $L\in\R^{(n-1)\times d}$, the condition in \Cref{th:nd1} holds in the case that $d\ge n-1$ and $L$ is of full row rank. Next, we consider another case where $L$ is not full column rank. 
	\begin{theorem}\label{th:dimless}
		Suppose that $L$ is a non-globally-optimal stationary point of \cref{eq:VL} and
		\begin{equation}\label{eq:dimd}
			{\rank}(L)<d.
		\end{equation} 
		Then, the second-order necessary optimality conditions fail at $L$.
	\end{theorem}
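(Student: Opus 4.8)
The plan is to violate the second order necessary condition $f_L''(L)\succeq 0$ by exhibiting an explicit negative curvature direction. The rank deficiency \eqref{eq:dimd} supplies a nonzero $w\in\Rd$ with $Lw=0$, and the natural ansatz is a rank one direction $\Delta L=uw^{T}$ with $u\in\R^{n-1}$ to be chosen. Writing $P=VL$ and introducing the reduced residual matrix $M:=V^{T}\cK^{*}(F(VL))V\in\Sc^{n-1}$, by \Cref{pro:fL''} we have $f_L''(L)(\Delta L,\Delta L)=f''(VL)(V\Delta L,V\Delta L)$, so it suffices to evaluate the two term Hessian formula \eqref{eq:Hessiantwoterms} along $\Delta P=V\Delta L=Vuw^{T}$.

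First I would simplify the Hessian along this direction. Because $Lw=0$, both cross terms vanish: $P(V\Delta L)^{T}=VLwu^{T}V^{T}=0$ and $(V\Delta L)P^{T}=Vu(Lw)^{T}V^{T}=0$, so $\cK(P\Delta P^{T}+\Delta P P^{T})=0$ and the first (always positive semidefinite) term of \eqref{eq:Hessiantwoterms} is identically zero. For the second term, $\Delta P\Delta P^{T}=\|w\|^{2}Vuu^{T}V^{T}$, and using the adjoint identities of \Cref{lem:adjderivs} one gets
\[
f_L''(L)(\Delta L,\Delta L)=2\langle F(VL),\cK(\Delta P\Delta P^{T})\rangle=2\|w\|^{2}\,u^{T}Mu.
\]
Thus the whole problem reduces to showing $M$ is not positive semidefinite.

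To see $M\not\succeq 0$ I would pass to the unreduced problem: by \Cref{th:equi} the stationarity of $L$ for \eqref{eq:VL} makes $P=VL$ a stationary point of \eqref{eq:F}, and it is not a global minimizer since $f(VL)=f_L(L)>0$ by \Cref{le:gOpt}. Repeating the computation in the proof of \Cref{le:pz} with $\bar P=V\bar L$, and using stationarity $ML=0$ to drop $\langle P,\cK^{*}(F(P))P\rangle=\langle L,ML\rangle=0$, one obtains
\[
\langle \bar L,M\bar L\rangle=\langle \bar P,\cK^{*}(F(P))\bar P\rangle=-\|F(VL)\|_F^{2}<0 ,
\]
so some column $\bar\ell$ of $\bar L$ satisfies $\bar\ell^{T}M\bar\ell<0$. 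Taking $u=\bar\ell$ gives $f_L''(L)(uw^{T},uw^{T})=2\|w\|^{2}u^{T}Mu<0$, so the second order necessary conditions fail at $L$.

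The hard part is not any individual computation but choosing the direction correctly. A generic negative curvature direction produced by $M\not\succeq 0$ would also activate the positive semidefinite term $H_1$ and could fail to be a descent direction overall; the point of the ansatz $\Delta L=uw^{T}$, with $w$ in the right null space of $L$ and $u$ a negative direction of $M$, is precisely to annihilate $H_1$ while keeping the $H_2$ contribution strictly negative. The only other care point is bookkeeping of the equivalence between \eqref{eq:VL} and \eqref{eq:F} so that \Cref{le:pz} applies verbatim to $P=VL$.
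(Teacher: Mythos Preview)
Your proof is correct and follows essentially the same approach as the paper: choose a rank-one direction $uw^{T}$ with $w$ in the right null space of $L$ so that the $H_1$ term vanishes, and then use the computation of \Cref{le:pz} to produce a vector along which the $H_2$ term is strictly negative. The only cosmetic difference is that the paper works in the unreduced coordinates (finding $a\in\Rn$ with $a^{T}\cK^{*}(F(P))a<0$ directly from $\cK^{*}(F(P))\nsucceq 0$), whereas you stay in the $L$-coordinates with $M=V^{T}\cK^{*}(F(VL))V$ and identify $u$ concretely as a column of $\bar L$.
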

	\begin{proof}
		Denote $P=VL$. According to \Cref{le:pz} and the subsequent discussion, \cref{eq:cKF} holds. Thus there exists $a\in\R^n$ such that $a^T\cK^*F(P)a<0$.
		Then, for any nonzero $w\in\R^d$,
		\begin{equation*}
			\begin{array}{rcl}
				\kvec(a w^T)^TH_2\kvec(a w^T)
				&=&\langle aw^T, \cK^*F(P)(a w^T)\rangle\\
				&=&\Tr(\cK^*F(P)(a w^T)(a w^T)^T) \\
				&=&w^Tw\Tr(\cK^*F(P)aa^T)\\
				&=&w^Tw a^T\cK^*F(P)a\\
				&<&0.
			\end{array}
		\end{equation*}
		
		By \cref{eq:dimd}, there exists a nonzero $w\in\R^d$ such that $w\in{\rm null}(L)$, meaning, 
		\begin{equation}\label{eq:pijv}
			Lw=0.
		\end{equation}
		We claim that $H_1\kvec(a w^T)=0$ holds.
		First,	we have
		$$J \kvec (a w^T)=\cK \cS VL \cT (a w^T).$$ 
		By \cref{eq:pijv}, we have $$L \cT (a w^T)=L\begin{bmatrix}
			a_1  w &a_2  w &\ldots & a_{n-1}  w 
		\end{bmatrix}=0.$$
		Thus, $H_1\kvec(a w^T)=0$ holds.
		In sum, we have 
		\[\kvec(a w^T)^T(4H_1+2H_2)\kvec(a w^T)<0,\]
		which implies the second-order necessary optimality condition
		\cref{eq:2opt} fails.
	\end{proof}
	Combining \Cref{th:nd1} and \Cref{th:dimless},  we present the main result of this section.

	\begin{theorem}
\label{thm:2ndorderopt}
		If $n\le d+1$, then any stationary point satisfying the second-order necessary optimality conditions is a global minimizer.
	\end{theorem}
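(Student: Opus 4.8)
The plan is to reduce the statement to the centered reformulation \eqref{eq:VL} and then dispatch it by a case split on the rank of the reduced variable, using \Cref{th:nd1} in one case and \Cref{th:dimless} in the other. Given a stationary point $P$ of \eqref{eq:F} at which the second-order necessary optimality conditions hold, I would first set $v=(P^Te)/n$, $P_v=P-v^T\otimes e$, and $L=V^TP_v$ with $V$ as in \eqref{eq:Vmatr}. By \Cref{th:equi}, which establishes the equivalence of the first- and second-order necessary conditions across \eqref{eq:F}, \eqref{eq:VL} and their translation/rotation reductions, $L$ is then a stationary point of \eqref{eq:VL} at which the second-order necessary conditions also hold. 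Since $L\in\Rnod$ its rank is at most $n-1$, and the hypothesis $n\le d+1$ gives $n-1\le d$.

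Next I would carry out the case split on ${\rm rank}(L)$. If ${\rm rank}(L)=n-1$, then \Cref{th:nd1}, which needs only stationarity, shows that $L$ is a global minimizer of \eqref{eq:VL}. If instead ${\rm rank}(L)\le n-2$, then ${\rm rank}(L)\le n-2\le d-1<d$; were $L$ not a global minimizer, \Cref{th:dimless} would force the second-order necessary conditions to fail at $L$, contradicting the hypothesis, so again $L$ is a global minimizer of \eqref{eq:VL}. Since $\bar P=V(V^T\bar P)$ attains objective value $0$ in \eqref{eq:VL}, a global minimizer $L$ satisfies $f_L(L)=0$.

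Finally I would translate back: $VL=VV^TP_v=P_v$ and $f$ is translation invariant, so $f(P)=f(P_v)=f(VL)=f_L(L)=0$, and hence $P$ is a global minimizer of \eqref{eq:F} by \Cref{le:gOpt}. The only point that requires care — and precisely where $n\le d+1$ is used — is exhaustiveness of the case split: a stationary $L$ could in principle have $d\le{\rm rank}(L)<n-1$, which is handled neither by \Cref{th:nd1} nor by \Cref{th:dimless}; the bound $n-1\le d$ rules this out, and at the boundary $n-1=d$ the full-rank case is covered by \Cref{th:nd1} while every rank-deficient case still satisfies ${\rm rank}(L)\le n-2=d-1<d$. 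Beyond invoking the cited results, there is essentially nothing to compute.
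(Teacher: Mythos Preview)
Your proposal is correct and follows essentially the same approach as the paper: a rank case split on $L$ that invokes \Cref{th:nd1} when $\rank(L)=n-1$ and \Cref{th:dimless} (by contradiction) when $\rank(L)<n-1\le d$. The paper's proof works directly in the reduced formulation \eqref{eq:VL} and omits the explicit passage from $P$ to $L$ and back, but otherwise the arguments coincide.
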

	\begin{proof}
			Suppose that $n\le d +1$, and $L$ is a stationary point satisfying the second-order necessary optimality condition. 
			If $\rank(L)=n-1$, then $L$ is globally optimal
by \Cref{th:nd1}. If $\rank(L)<n-1$, then $\rank(L)< d$. If we assume $L$ is not a global minimizer, according to
\Cref{th:dimless}, $L$ does not satisfy the second-order necessary
optimality condition, a contradiction.
		\end{proof}
	
Recalling \Cref{rem:mainreducprobtriang}, we note that $n\le d+1$ is exactly the condition such that the underlying system of equations is square. %underdetermined or square. 
When $n > d+1$ (overdetermined), it is possible to find
local nonglobal minimizers.

\section{\lngmp s Examples}

We now provide instances with \lngmp s. 
The data and codes are available at
\href{https://github.com/MengmengSong97/EDM-code}{https://github.com/MengmengSong97/EDM-code}.

We first provide an analytical
example with $d=1$ with a specific simple structure for the points in
$\Rd$, see~\Cref{sect:analytexlngm}. 
Then in~\Cref{sect:Kantd1,sect:Kantd2} we give two
examples where we numerically obtain \underline{approximate}
second-order stationary
points. Then, we analytically prove that the assumptions of the
Kantorovich theorem hold at these two points, i.e.,~this implies that
there exists \lngmp s in the neighborhoods.
We consider the sensitivity
analysis needed to analytically prove that our examples have \lngmp s.
We exploit the strength of the classical Kantorovich theorem for the
convergence of Newton's Method to exact stationary points when using the
approximate stationary points that we found as starting points. See 
\Cref{th:Kantorovich} and \Cref{th:Kantorovich2}, below.

{\subsection{{ An Explicit Example with a \lngm
with $d=1$}}
\label{sect:analytexlngm} 
We now present a simple explicit example with an \lngmp,
where we can analytically verify the \lngm. 
\begin{example}\label{ex:exactex}
	Let 
\[
d=1, n>6; \quad
\bar{P}^T=[\bar p_1, \cdots, \bar p_n],\ \tilde{P}^T=[\tilde p_1,
\cdots, \tilde p_n]\in \Rdn,
\]
 with 
\[
\bar p_1=2,\ \bar p_2=0,\ \bar p_3=\cdots=\bar p_n=1 \quad \text{and}
\quad \tilde p_1=\tilde p_2=0,\ \tilde p_3=\cdots=\tilde p_n=1.
\]
\end{example}
We now continue and illustrate that $\tilde P$ is a \lngm of~\cref{eq:F}
in~\Cref{prob:origF}.  Let \textdef{$E_{2,n-2}$} be the $2\times n-2$ matrix of
all ones, and $\begin{bmatrix}0 \end{bmatrix}$ denote the matrix of
zeros of appropriate size.
From the definitions of $F(\cdot)$, $\cK(\cdot)$ and $\cK^*(\cdot)$, we
have: 
\begin{eqnarray}
	F(\tilde P)&=&\cD(\tilde P)-\cD(\bar P)\nonumber\\
	&=&
\begin{bmatrix}
\begin{bmatrix}0 & 0\cr 0 & 0 \end{bmatrix}& E_{2,n-2} \\
		   E_{2,n-2}^T & \begin{bmatrix}0 \end{bmatrix}
\end{bmatrix}
-
\begin{bmatrix}
\begin{bmatrix}0 & 4\cr 4 & 0 \end{bmatrix}& E_{2,n-2} \\
		   E_{2,n-2}^T & \begin{bmatrix}0 \end{bmatrix}
\end{bmatrix}
\\&=&
\begin{bmatrix}
\begin{bmatrix}0 & -4\cr -4 & 0 \end{bmatrix}& 0_{2,n-2} \\
		   0_{2,n-2}^T & \begin{bmatrix}0 \end{bmatrix}
\end{bmatrix};
\end{eqnarray}
\[
	\cK^*(F(\tilde P))
	=2\begin{bmatrix}
	\begin{bmatrix}-4 & 4\cr 4 & -4\end{bmatrix} & 
	\begin{bmatrix}0 \end{bmatrix} \\
	\begin{bmatrix}0 \end{bmatrix} &
	\begin{bmatrix}0 \end{bmatrix}
	\end{bmatrix}.
\]
Moreover, we have
\begin{eqnarray}
&&\langle \cK(\tilde P\Delta P^T+\Delta P \tilde P^T), \cK(\tilde P\Delta	P^T+\Delta P \tilde P^T)\rangle
\nonumber\\
&=&4\sum_{i\neq j}[(\tilde p_i-\tilde p_j)(\Delta p_i-\Delta p_j)]^2
\nonumber\\
&=&8
\Delta P^T
\begin{bmatrix}
		\sum_{i=1}^n(\tilde p_1-\tilde p_i)^2 & -(\tilde p_1-\tilde p_2)^2 & \cdots & -(\tilde p_1-\tilde p_n)^2\\
		-(\tilde p_1-\tilde p_2)^2 & \sum_{i=1}^n(\tilde p_2-\tilde p_i)^2 & \cdots & -(\tilde p_2-\tilde p_n)^2\\
		\vdots& \vdots & \cdots & \vdots\\
		-(\tilde p_1-\tilde p_2)^2 & -(\tilde p_2-\tilde p_n)^2 & \cdots & \sum_{i=1}^n(\tilde p_n-\tilde p_i)^2
\end{bmatrix}
\Delta P\nonumber\\
&=&8
\Delta P^T
\begin{bmatrix}
(n-2)I & -E_{2,n-2} \cr
-E_{2,n-2}^T  & 2I
\end{bmatrix}
\Delta P.\nonumber
\end{eqnarray}
Then we can compute from \cref{eq:1storderHess,eq:fprpr} that the
derivative (gradient) is
\begin{equation}
f^\prime(\tilde P)=4[\Diag (F(\tilde P)e)-F(\tilde P)]\tilde P=0.
\end{equation}
And the Hessian quadratic form is
\begin{eqnarray}
	&&f^{\prime\prime}(\tilde P)(\Delta P, \Delta P)\nonumber\\
	&=& \langle \cK(\tilde P\Delta P^T+\Delta P \tilde P^T), \cK(\tilde P\Delta	P^T+\Delta P \tilde P^T)\rangle +2\langle F(\tilde P), \cK(\Delta P \Delta P^T)\rangle \nonumber
%%	&=&8
%%		\Delta P^T \left(
%%		\begin{bmatrix}
%%			n-2 & 0 & -1 & \cdots & -1\\
%%			0 & n-2 & -1 & \cdots & -1\\
%%			-1 & -1 & 2 & \cdots & 0\\
%%			\vdots & \vdots & \vdots & \cdots & \vdots\\
%%			-1 & -1 & 0 & \cdots & 2
%%		\end{bmatrix}+
%%	\begin{bmatrix}
%%		-2 & 2 & 0 &  0 & \cdots & 0\\
%%		2 & -2 & 0 & 0 & \cdots & 0\\
%%		0 & 0 & 0 &  0 & \cdots & 0\\
%%		\vdots& \vdots & \vdots & \vdots& \cdots & \vdots\\
%%		0 & 0 & 0 & 0& 0 & 0
%%	\end{bmatrix}\right)
%%\Delta P\nonumber
	\\&=&8
		\Delta P^T \left(
\begin{bmatrix}
(n-2)I & -E_{2,n-2} \cr
-E_{2,n-2}^T  & 2I
\end{bmatrix} + 
	\begin{bmatrix}
	\begin{bmatrix}-2 & 2\cr 2 & -2\end{bmatrix} & 
	\begin{bmatrix}0 \end{bmatrix} \\
	\begin{bmatrix}0 \end{bmatrix} &
	\begin{bmatrix}0 \end{bmatrix}
	\end{bmatrix}
	\right)
\Delta P.\nonumber
\nonumber
\end{eqnarray}

The Hessian $f^{\prime\prime}(\tilde P)=\nabla^2f(\tilde P)$  is a rank
$3$ update of $16 I$. It
is positive semidefinite with nullspace $\spanl(e)$ if, and only if,
$n\ge 7$. (It is indefinite when $n<6$.) 
Thus, $\tilde P$ is a second-order stationary point of the
problem with data given above.
Next, we prove that $\tilde P$ is a \lngm by considering the reduced
formulation \cref{eq:VL}.

To center as done in \Cref{th:equi2}, we let 
$$\tilde v=\frac{\tilde P^Te}{n}\in\R,\ \tilde P_*=\tilde P-e\tilde v^T,\ \tilde L=V^T\tilde P_*,$$
to obtain $\tilde L$.
According to \Cref{th:equi}, $\tilde L$ is a second-order stationary point of the function $f_L(L)$. 
By \Cref{pro:fL''}, we have $f''_L(\tilde L)=V^Tf''(V\tilde L)V=V^Tf''(\tilde P)V$. 
Since $f''(\tilde P)$ has a one-dimensional nullspace ($\spanl\{e\}$),
its restriction to $\spanl\{e\}^\perp$ is positive
definite. Thus, $f''_L(\tilde L)$ is positive definite. This implies
that $\tilde L$ satisfies the second-order sufficient optimality
conditions for a strict local minimum of $f_L(L)$.
By \Cref{th:equi2} and \Cref{le:gOpt}, $\tilde P$ is in fact a 
\lngm of $f(P)$. Note that
the objective function value $f(\tilde P) = 16>0 = f(\bar P)$,
thus confirming that $\tilde P$ is not a global minimum.
}

\subsection{{Examples via Kantorovich Theorem and Sensitivity Analysis}}
\label{sect:KantSens}
{We now present \Cref{ex:counterexample,ex:counterexample2},
with $d=1,2$, respectively, where we first find an approximate
second-order stationary point $\tilde L$ numerically that has a sufficiently large
(positive) objective value; and then we prove that 
there is a
\emph{\lngm nearby} using the Kantorovich theorem and sensitivity analysis.}

\subsubsection{Case $d=1$}
\label{sect:casedone}
In this case we analyze a configuration {matrix
$\tilde P=V\tilde L\in \R^{n\times 1}, 
\tilde L\in \R^{(n-1)\times 1}$}, satisfying:
\begin{equation}
\label{eq:configapproxlngm}
\begin{array}{rll}
\\ \text{Objective value}: & f(\tilde P)=f_L(\tilde L) >\tilde f_L,
 & \text{for some (large)  } \tilde f_L >0;
\\ \text{Near stationarity}: &  \|\nabla f_L(\tilde L)\| <\tilde g_L, &
\text{for
some (small)  } \tilde g_L> 0;
\\ \text{Local Convexity}: &  \lambda_{\min}(\nabla^2 f_L(\tilde L)) > \tilde
\lambda_L,  & \text{for some (large)  }  \tilde \lambda_L > 0.  
\end{array}
\end{equation}

While theoretically exact, the floating-point representations introduce round-off errors in finite-precision computations. 
Our MATLAB implementation performs complete finite-precision arithmetic analysis in order to compute the rigorous bounds $\tilde f_L$, $\tilde g_L$, and $\tilde \lambda_L$  (see \Cref{footn:papercodelink}).

% We emphasize that $\tilde P, \tilde L$ have rational entries and that
% we are doing finite precision arithmetic for which both sensitivity and
% roundoff error analysis can be done. 
%{We carefully analyse errors aside all of our calculations, see \Cref{footn:papercodelink}.}

		We apply the
		classical Kantorovich theorem,
		e.g.,~\cite[Thm 5.3.1]{DennSch:83}, to show that there
is a point \emph{nearby} that satisfies: (i) it is an \emph{exact}
 stationary point;
	(ii) the function value is positive; and (iii) the
		Hessian is still positive definite. This provides an
		analytic proof that we have a theoretically
verified \lngm near $\tilde L$.

\label{sect:Kantd1}
\begin{example}\label{ex:counterexample}
			An example with $n=50, d=1$ is given, with data
$\bar P=V\bar L,\ \tilde P=V\tilde L \in \Rnd$.
(See the footnote below.\footnote{\label{footn:papercodelink}
The data and codes are available at
\href{https://github.com/MengmengSong97/EDM-code}{https://github.com/MengmengSong97/EDM-code}})
Matrix $\bar D$ is the distance matrix obtained from $\bar L$ by
			$\bar D=\cK(V\bar L(V\bar L)^T)$.
			Thus,  $\bar L$ is a global minimizer. 
			$\tilde L$ is a numerically convergence point obtained by a trust region method with random
initialization, 
			where the objective value is
			\begin{equation}\label{eq:fL}
				f_L(\tilde L) >2.65\times10^{3},
			\end{equation}
			the absolute and relative gradient norms are 
			\begin{equation}\label{eq:gl}
				\|\nabla f_L(\tilde L)\|<10^{-7},\ \frac{\|\nabla f_L(\tilde L)\|}{1+f_L(\tilde L)} 	<3.53\times10^{-18},
			\end{equation}
			and the least eigenvalue of the Hessian matrix is 
			\begin{equation}\label{eq:lmin}
				2.12\times 10^2>\lambda_{\min} (\nabla^2 f_L(\tilde L) ) > 2.10\times 10^2.
			\end{equation}
		\end{example}
	In the following, we will verify that $f_L(L)$ has a \lngm.
		\begin{remark}
			The problem to find a \lngm is a \emph{nonlinear least squares} problem.
			The standard approach for nonlinear least squares
			is to use the Gauss-Newton method, which simplifies the Hessian $4H1 + 2H_2$ (see \cref{eq:2ndorderHess,eq:H12})
			by keeping only $4H_1$ 
			and omitting $2H_2$ 
			(same for $L$, cf.~\cref{eq:2optfv0}).
			This approximation relies on the assumption that
$f_L(L)$ is near zero, a condition we intentionally avoid, as this would yield the global minimum.
		\end{remark}
	
Now, we find an estimate for the Lipschitz constant $\gamma>0$ for the Hessian matrix of $f_L$. From our numerical output, we know that the smallest eigenvalue
		$\lambda_{\min}(\nabla^2 f_L(\tilde L))> 0$. By continuity of
		eigenvalues, we are guaranteed that this holds in a neighbourhood of
		$\tilde L$, which is now estimated in \Cref{prop:Lipconstant}.
		\begin{proposition} 
\label{prop:Lipconstant}
Let $r>0$ and $\tilde L\in \Rnod$ be given. If
			\begin{equation}\label{eq:gammal}
				\gamma\ge24\sqrt{2}\left(\sum_{i, j}\|(V\tilde L)[i,:]-(V\tilde
				L)[j,:]\|_F+2n^{3/2}r\right),
			\end{equation}
			then $\gamma$ is a Lipschitz constant for the Hessian of $f_L$ in the radius-$r$
			neighborhood of $\tilde L$, i.e.,
			\begin{equation}\label{eq:gamma}
				\|\nabla^2 f_L(\hat L) -\nabla^2 f_L(\check L)\|_2\le \gamma \|\hat L-\check L\|_F, \quad \text{for\ all  }  \hat L, \check L\in B_r(\tilde L).
			\end{equation}
			Moreover, 
			\begin{equation}\label{eq:gammas}\lambda_{\min}(\nabla^2 f_L (L)) \ge \lambda_{\min}(\nabla^2f_L(\tilde L))- \gamma r, \quad \text{for\ all  }  L \in B_r(\tilde L).	\end{equation}
		\end{proposition} 
		\begin{proof}
			By the definition of the induced norm, \cref{eq:gamma} is equivalent to \begin{equation}\label{eq:gamma2}
				|f_L''(\hat L)(\Delta L, \Delta L) -f_L''(\check L)(\Delta L, \Delta L)|\le \gamma \|\hat L-\check L\|,%\ \text{for\ all  }  \hat L, \check L\in B_r(\tilde L), \|\Delta L\|_F=1.
			\end{equation}
for all  $\hat L, \check L\in B_r(\tilde L), \|\Delta L\|=1$.
			Let $$\hat P=V\hat  L, \check P=V\check L, \Delta P=V\Delta L, \tilde P = V \tilde L.$$
			According to \cref{eq:Hessiantwoterms} and \Cref{pro:fL''}, we have
\begin{equation*}
	\begin{array}{rcl}
f_L''(\hat L)(\Delta L, \Delta L)
	&=&
f''(\hat P)(\Delta P, \Delta P)
\\&=&
					\|\cK(\hat P\Delta P^T+\Delta P \hat P^T)\|_F^2+2\langle F(\hat P), \cK(\Delta P \Delta P^T)\rangle
\\&=&
		\sum_{i, j}(2\hat p_i^T\Delta p_i+2\hat p_j^T\Delta p_j-2\hat p_i^T\Delta p_j-2\hat p_j^T\Delta p_i)^2
\\&& \quad +2\sum_{i, j}\|\hat p_i-\hat p_j\|^2\|\Delta p_i-\Delta p_j\|^2
\\&=&
	4\sum_{i, j}[(\hat p_i-\hat p_j)^T(\Delta p_i-\Delta p_j)]^2+2\sum_{i, j}\|\hat p_i-\hat p_j\|^2\|\Delta p_i-\Delta p_j\|^2.
				\end{array}
			\end{equation*}
			The calculations about $\check L$ are similar, 
			implying 
			\begin{equation*}
				\begin{array}{ccl}
					&&f_L''(\hat L)(\Delta L, \Delta L)-f_L''(\check L)(\Delta L, \Delta L)\\
					=&&4\sum_{i, j}[(\hat p_i-\hat p_j)^T(\Delta p_i-\Delta p_j)]^2-[(\check p_i-\check p_j)^T(\Delta p_i-\Delta p_j)]^2\\
					&& \qquad  +2\sum_{i, j}(\|\hat p_i-\hat p_j\|^2-\|\check p_i-\check p_j\|^2)\|\Delta p_i-\Delta p_j\|^2\\
					=&&4\sum_{i, j}(\hat p_i-\hat p_j-\check p_i+\check p_j)^T(\Delta p_i-\Delta p_j)(\hat p_i-\hat p_j+\check p_i-\check p_j)^T(\Delta p_i-\Delta p_j)\\
					&& \qquad +2\sum_{i, j}(\hat p_i-\hat p_j-\check p_i+\check p_j)^T(\hat p_i-\hat p_j+\check p_i-\check p_j)\|\Delta p_i-\Delta p_j\|^2.
				\end{array}
			\end{equation*}
			Then, 
			\begin{equation*}
				\begin{array}{ccl}
					&&|f_L''(\hat L)(\Delta L, \Delta L)-f_L''(\check L)(\Delta L, \Delta L)|\\
					\le&&6\sum_{i, j}\|\hat p_i-\hat p_j-\check p_i+\check p_j\|\|\hat p_i-\hat p_j+\check p_i-\check p_j\|\|\Delta p_i-\Delta p_j\|^2.
				\end{array}
			\end{equation*}
			
			Since $\|\Delta P\|_F=\|V\Delta L\|_F=\|\Delta L\|_F=1,$
			\begin{equation*}
					\|\Delta p_i-\Delta p_j\|^2
					\le2(\|\Delta p_i\|^2+\|\Delta p_j\|^2)
					\le2.
			\end{equation*}
			From $\hat L, \check L\in B_r(\tilde L)$, it follows that $\hat P, \check P \in B_r(\tilde P)$.
	Then, 
			we have
\begin{equation*}
		\begin{array}{rcl}
					\|\hat p_i-\hat p_j-\check p_i+\check p_j\|
					\le&&\|\hat p_i-\check p_i\|+\|\hat p_j-\check p_j\|\\
					\le &&\sqrt{2}\sqrt{\|\hat p_i-\check p_i\|^2+\|\hat p_j-\check p_j\|^2}\\
	\le &&\sqrt{2}\|\hat L-\check L\|_F,
				\end{array}
			\end{equation*}where the first inequality follows from the triangle inequality, the second from the Cauchy-Schwarz inequality, and the third from the fact that $$\|\hat P-\check P \|_F=\|V\hat L-V\check L \|_F=\|\hat L-\check L \|_F.$$ We also have 
\begin{equation*}
	\begin{array}{rcl}
		\ && \sum_{i, j}\|\hat p_i-\hat p_j+\check p_i-\check p_j\|\\
					=&&\sum_{i, j}\|2(\tilde p_i-\tilde p_j)+(\hat p_i-\tilde p_i)-(\hat p_j-\tilde p_j)+(\check p_i-\tilde p_i)-(\check p_j-\tilde p_j)\|\\
	\le&&2\sum_{i, j}\|\tilde p_i-\tilde p_j\|+\sum_{i, j}\|\hat
p_i-\tilde p_i\|+\sum_{i, j}\|\hat p_j-\tilde p_j\|+\\
  && \qquad\sum_{i, j}\|\check p_i-\tilde p_i\|+\sum_{i, j}\|\check p_j-\tilde p_j\|\\
	=&&2\sum_{i, j}\|\tilde p_i-\tilde p_j\|+n\sum_{i}\|\hat
p_i-\tilde p_i\|+n\sum_{j}\|\hat p_j-\tilde p_j\|+\\
&& \qquad \quad n\sum_{i}\|\check p_i-\tilde p_i\|+n\sum_{ j}\|\check p_j-\tilde p_j\|\\
		\le&&2\sum_{i, j}\|\tilde p_i-\tilde p_j\|+4n^{3/2}r,
				\end{array}
			\end{equation*}
where the last inequality follows from the H{\"o}lder inequality.
	Thus, 
	\begin{equation*}
		\begin{array}{rcl}
					|f_L''(\hat L)(\Delta L, \Delta L)-f_L''(\check L)(\Delta L, \Delta L)|
					\le&&12\sqrt{2}\|\hat L-\check L \|_F\sum_{i, j}\|\hat p_i-\hat p_j+\check p_i-\check p_j\|\\
\le&&24\sqrt{2}\|\hat L-\check L \|_F\left(\sum_{i, j}\|\tilde
	p_i-\tilde p_j\|+2n^{3/2}r\right),
				\end{array}
			\end{equation*}
			{applying \cref{eq:gammal} turns out \cref{eq:gamma}.} 
			By \cref{eq:gamma2}, we have  \begin{equation*}
				\begin{array}{rcl}
					f_L''(L)(\Delta L, \Delta L)
					=&&f_L''(\tilde L)(\Delta L, \Delta L)-(f_L''(\tilde L)(\Delta L, \Delta L)-f_L''(L)(\Delta L, \Delta L) )\\
					\ge&&f_L''(\tilde L)(\Delta L, \Delta L)-|f_L''(L)(\Delta L, \Delta L) -f_L''(\tilde L)(\Delta L, \Delta L)|\\
					\ge&& \lambda_{\min}(\nabla^2f_L(\tilde L))- \gamma \|\hat L-\tilde L\|_F\\
					\ge&& \lambda_{\min}(\nabla^2f_L(\tilde L))- \gamma r,
\quad \text{for\ all  }  L\in B_r(\tilde L), \|\Delta L\|_F=1.
				\end{array}
			\end{equation*}
			Thus, we obtain \cref{eq:gammas}.
		\end{proof}

		To verify the existence of a \lngm  for \Cref{ex:counterexample}, we calculate the Lipschitz constant estimated in
		\Cref{prop:Lipconstant}.
		Let $r=10^{-3}$. 
		Since $$\sum_{i,j}\|(V\tilde L)[i,:]-(V\tilde L)[j,:]\|<2.13\times 10^3,$$ \cref{eq:gammal} gives
		$$\gamma=7.24\times 10^4.$$
		Moreover, by 
		\cref{eq:gammas}, we have
		\begin{equation}\label{eq:gammas2}\lambda_{\min}(\nabla^2 f_L (L)) \ge 211- 7.24\times 10^4\times r=138.6>0, 
			\quad \text{for\ all  } L \in B_r(\tilde L).
		\end{equation}
		That is, we find a neighbourhood where the Hessian stays positive semidefinite.
		Next, we prove that the
		objective stays sufficiently positive in a region around $\tilde L$.

		\begin{lemma}
			\label{lem:posobjf}
			Let the configuration $\tilde P = V\tilde L\in \Rnod, \tilde L\in\Rnod$
			and positive parameters
			$\bar f_L, r \in \Rpp$, be given. 
			Suppose that $f_L(\tilde L) > \bar f_L$ and that
			the Hessian $\nabla^2 f_L$ is \textdef{uniformly
positive definite} in the $r$-ball
			around $\tilde L$, i.e.,
			\begin{equation}
				\label{eq:uniformposdef}
				\lambda_{\min}(\nabla^2 f_L(L)) > 0, \quad \text{for\ all  }  L \in B_r(\tilde L).
			\end{equation}
			Then $f_L$ is \textdef{positively uniformly
bounded} from below in $B_r(\tilde L)$, i.e.,
			\[
			f_L(L) > \bar f_L > 0, 
			\quad \text{for\ all  } \| L-\tilde L\|_F\leq 
			\min\left\{r,\frac {f_L(\tilde L)-\bar f_L}{\|\nabla f_L(\tilde L)\|_F}\right\}.
			\]
		\end{lemma}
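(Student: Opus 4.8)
The plan is to exploit the positivity of the Hessian on the ball $B_r(\tilde L)$ to conclude that $f_L$ is (strongly) convex there, and then use a first-order lower bound to control how much the value can drop as we move away from $\tilde L$; the radius $\min\{r,(f_L(\tilde L)-\bar f_L)/\|\nabla f_L(\tilde L)\|\}$ is precisely the largest step for which the drop cannot exceed $f_L(\tilde L)-\bar f_L$ while staying inside the region of convexity.

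First I would record a uniform lower bound $\mu := \inf_{L\in B_r(\tilde L)}\lambda_{\min}(\nabla^2 f_L(L))$. By the hypothesis that $\nabla^2 f_L$ is uniformly positive definite on $B_r(\tilde L)$ (equivalently, by continuity of $L\mapsto \lambda_{\min}(\nabla^2 f_L(L))$ together with compactness of the closed ball) we have $\mu>0$. Since $B_r(\tilde L)$ is convex, for any $L\in B_r(\tilde L)$ the segment $\{\tilde L+t(L-\tilde L):t\in[0,1]\}$ lies in $B_r(\tilde L)$, and the integral (Taylor-with-remainder) form of $f_L$ along this segment, combined with $\nabla^2 f_L\succeq \mu I$ on the segment, yields the strong-convexity estimate
$$
f_L(L)\ \ge\ f_L(\tilde L)+\langle \nabla f_L(\tilde L),\,L-\tilde L\rangle+\tfrac{\mu}{2}\|L-\tilde L\|^2 .
$$
Here $\nabla f_L$, $\nabla^2 f_L$ and all norms are with respect to the Frobenius inner product on $\Rnod$, so this is the standard one-variable argument applied in that space.

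Next I would bound the linear term by Cauchy--Schwarz, $\langle \nabla f_L(\tilde L),L-\tilde L\rangle\ge -\|\nabla f_L(\tilde L)\|\,\|L-\tilde L\|$, and restrict to $L$ with $\|L-\tilde L\|\le \rho:=\min\{r,\ (f_L(\tilde L)-\bar f_L)/\|\nabla f_L(\tilde L)\|\}$ (interpreting the quotient as $+\infty$, hence $\rho=r$, in the degenerate case $\nabla f_L(\tilde L)=0$). The constraint $\|L-\tilde L\|\le r$ guarantees $L\in B_r(\tilde L)$, so the displayed inequality applies; the constraint $\|L-\tilde L\|\le (f_L(\tilde L)-\bar f_L)/\|\nabla f_L(\tilde L)\|$ gives $\|\nabla f_L(\tilde L)\|\,\|L-\tilde L\|\le f_L(\tilde L)-\bar f_L$. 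Combining,
$$
f_L(L)\ \ge\ f_L(\tilde L)-\bigl(f_L(\tilde L)-\bar f_L\bigr)+\tfrac{\mu}{2}\|L-\tilde L\|^2\ =\ \bar f_L+\tfrac{\mu}{2}\|L-\tilde L\|^2\ \ge\ \bar f_L .
$$
For the strict inequality: if $L\neq\tilde L$ then $\tfrac{\mu}{2}\|L-\tilde L\|^2>0$ (this is where $\mu>0$ is used), and if $L=\tilde L$ then $f_L(L)=f_L(\tilde L)>\bar f_L$ by assumption; in either case $f_L(L)>\bar f_L>0$.

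There is no genuinely hard step; this is essentially a single convexity estimate. The only points needing care are making precise that ``uniformly positive definite on the ball'' supplies the positive constant $\mu$ (compactness plus continuity of eigenvalues), which is what upgrades the weak bound $f_L(L)\ge\bar f_L$ to the strict bound $f_L(L)>\bar f_L$ on the closed sub-ball of radius $\rho$; the bookkeeping of the degenerate case $\nabla f_L(\tilde L)=0$; and keeping all derivatives and norms in the Frobenius geometry on $\Rnod$ so that the segment argument and Cauchy--Schwarz apply verbatim.
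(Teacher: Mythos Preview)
Your proof is correct and follows essentially the same route as the paper: convexity of $f_L$ on the ball $B_r(\tilde L)$ gives the first-order lower bound $f_L(L)\ge f_L(\tilde L)+\langle\nabla f_L(\tilde L),L-\tilde L\rangle$, and Cauchy--Schwarz plus the radius restriction then yields $f_L(L)>\bar f_L$. The only (minor) difference is that you retain the quadratic remainder $\tfrac{\mu}{2}\|L-\tilde L\|^2$ from strong convexity to make the strict inequality explicit, whereas the paper drops it and asserts strictness directly.
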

		\begin{proof}
By the positive definiteness assumption of the Hessian in the $r$-ball
			$B_r(\tilde L)$, we can apply convexity of $f_L$
in the ball. Therefore,
for all {$L\in\Rnod$ such that $\| L-\tilde L\|_F\leq 
		\min\left\{r, (f_L(\tilde L)-\bar f_L)/\|\nabla
f_L(\tilde L)\|_F\right\}$}, we have
			\[
			\begin{array}{rcll}
				f_L(L) 
				& \geq &
				f_L(\tilde L) + 
				\langle \nabla f_L(\tilde L), L-\tilde L\rangle
				\\& \geq &
				f_L(\tilde L) - \|  \nabla f_L(\tilde L)\|_F \|L-\tilde L\|_F
				\\& > & \bar f_L > 0.
			\end{array}
			\]
		\end{proof}
		
		\index{$B_\delta(\tilde L)$, $\delta$-ball about $\tilde L$}
		\index{$\delta$-ball about $\tilde L$, $B_\delta(\tilde L)$}
		
		By \cref{eq:fL}, \cref{eq:gl} and considering $\bar f_L=10^{3}$, we get 
		$$
			\frac{f_L(\tilde L) - \bar f_L}{\|\nabla f_L(\tilde L)\|}> \frac{2.65\times10^{3}- 10^3}{10^{-7}} > r.$$
	According to \Cref{lem:posobjf} { with
\cref{eq:gammas2}} we conclude that
		\begin{equation}\label{eq:fL2}
			f_L(L) > \bar f_L > 0, \quad \text{for\ all  }L \in B_r(\tilde L).
		\end{equation}

		We now apply the classical Kantorovich theorem to prove the existence of a 
		unique \lngm point within a certain neighborhood. We reword
		the version in~\cite[Thm 5.3.1]{DennSch:83}.
		
		\begin{theorem}\label{th:Kantorovich}
			Let the configuration matrix $\tilde P=V\tilde L\in \Rnd, \tilde L\in \Rnod$ 
			be given. 
			Let $r\in \Rpp$ be found such that 
			$$\nabla^2 f_L(L)\succ 0, \quad \text{for\ all  }  L \in B_r(\tilde L),$$
			and $\bar f_L$ satisfying $$f_L(L) > \bar f_L > 0, \quad \text{for\ all  }  L \in B_r(\tilde L).$$
			Let $\gamma$ be a Lipschitz constant for the Hessian of $f_L$ in the
			$r$-ball about $\tilde L$. Set 
			\[
			\beta := \|\nabla^2 f_L(\tilde L)^{-1}\|_2  \quad \text{and} \quad 
			\eta :=  \|\nabla^2 f_L(\tilde L)^{-1}\nabla f_L(\tilde L)\|.
			\]
			Define $\gamma_R = \beta \gamma$ and $\alpha = \gamma_R\eta$.
			If $\alpha \leq \frac 12$ and $r\geq 
			r_0:= \frac {1-\sqrt{1-2\alpha}}{\beta\gamma}$, then the sequence
			$L_0=\tilde L, L_1, L_2, \ldots$, produced by
			\[
			L_{k+1} = L_k - \nabla^2f_{L}(L_k)^{-1} \nabla f_L(L_k), \, k = 0,1,\ldots
			\]
			is well defined and converges to $L_*$, a unique root of 
			the gradient $\nabla f_L$ in
			the closure of $B_{r_0}(\tilde L)$. If $\alpha <\frac 12$, then $L_*$ is 
			the unique zero of $\nabla f_L$ in
		 $B_{r_1}(\tilde L)$, where
			$$r_1:= \min\left\{r,\frac {1+\sqrt{1-2\alpha}}{\beta\gamma}\right\},$$ and
			$
			\|L_k-L_*\|_F \leq (2\alpha)^{2k} \frac \eta \alpha, \quad k = 0,1,\ldots.
			$
			Moreover, $L_*$ is a \lngm.
		\end{theorem}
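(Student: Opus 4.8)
The plan is to obtain the theorem as an essentially immediate consequence of the classical Newton--Kantorovich theorem \cite[Thm 5.3.1]{DennSch:83}, together with two short post-processing observations that promote the conclusion ``$L^*$ is a zero of $\nabla f_L$'' to the stronger conclusion ``$L^*$ is a local nonglobal minimizer of $f_L$''. In other words, the convergence, uniqueness, and quadratic-rate statements will be a verbatim translation of the cited result once its hypotheses are matched, and the only genuinely new content is the last sentence, that $L^*$ is a \lngmp. Note in particular that the cited theorem produces a stationary point, so no prior assumption on the existence of a critical point is needed.

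First I would check the hypotheses of \cite[Thm 5.3.1]{DennSch:83} in the present setting. Since $f_L : \Rnod \to \R$ is a polynomial it is $C^\infty$ on all of $\Rnod$, so there is no domain restriction to worry about. Invertibility of $\nabla^2 f_L(\tilde L)$ with $\|\nabla^2 f_L(\tilde L)^{-1}\| = \beta$ is immediate because the Hessian at $\tilde L$ is assumed positive definite, and $\beta, \eta$ are \emph{defined} to be precisely the two operator-norm quantities the cited theorem requires. The Hessian is $\gamma$-Lipschitz on $B_r(\tilde L)$ by hypothesis (in the examples this $\gamma$ is supplied by \Cref{prop:Lipconstant}), and the assumption $r \ge r_0$ guarantees that the closed ball $\overline{B_{r_0}(\tilde L)}$, on which the Kantorovich conclusions are stated, sits inside the region $B_r(\tilde L)$ where both positive definiteness and the Lipschitz bound have been established (in the boundary case $r = r_0$ one uses that the polynomial Hessian is Lipschitz on a neighborhood of $\overline{B_r(\tilde L)}$ anyway). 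Finally the smallness condition $\alpha = \beta\gamma\eta \le \tfrac12$ is assumed. Hence \cite[Thm 5.3.1]{DennSch:83} applies and yields directly: well-definedness of $L_0 = \tilde L, L_1, L_2, \dots$; their convergence to $L^*$, the unique zero of $\nabla f_L$ in $\overline{B_{r_0}(\tilde L)}$; and, when $\alpha < \tfrac12$, the enlarged uniqueness radius $r_1$ and the bound $\|L_k - L^*\| \le (2\alpha)^{2^k}\eta/\alpha$. This settles all of the statement except the final sentence.

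To finish, observe that $L^* \in \overline{B_{r_0}(\tilde L)} \subseteq B_r(\tilde L)$, so $\nabla f_L(L^*) = 0$ and $\nabla^2 f_L(L^*) \succ 0$; by the second-order sufficient conditions $L^*$ is a strict local minimizer of $f_L$. Moreover, by hypothesis $f_L(L) > \bar f_L > 0$ for all $L \in B_r(\tilde L)$, hence $f_L(L^*) > 0$; since $f_L \ge 0$ everywhere and $f_L(\bar L) = 0$, the global minimum value of $f_L$ equals $0$ (the $f_L$-version of \Cref{le:gOpt}, via $f_L(L) = f(VL)$), so $L^*$ is not a global minimizer. Combining the last two facts, $L^*$ is a local nonglobal minimizer, a \lngmp; and, if desired, \Cref{th:equi2} transfers this to a \lngmp $VL^*$ of the original problem \Cref{eq:F}. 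I expect the only delicate point to be the bookkeeping in the first step: ensuring that $\overline{B_{r_0}(\tilde L)}$ is genuinely contained in $B_r(\tilde L)$ (exactly why $r \ge r_0$ is imposed) and that $L^*$ lands there, so that the ``strict local minimizer'' conclusion — not merely ``critical point'' — is legitimate; everything else is a direct invocation of the cited theorem.
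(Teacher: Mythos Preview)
Your proposal is correct and follows essentially the same approach as the paper: invoke the Kantorovich theorem \cite[Thm 5.3.1]{DennSch:83} directly to obtain the well-definedness, convergence, uniqueness, and rate statements, and then combine the positive-definiteness of the Hessian on $B_r(\tilde L)$ with the strict positive lower bound on $f_L$ (the content of \Cref{prop:Lipconstant} and \Cref{lem:posobjf}) to upgrade ``stationary point'' to ``\lngmp''. The paper's own proof is a two-sentence sketch pointing to exactly these ingredients; you have simply spelled out the verification of the Kantorovich hypotheses and the post-processing step in more detail, and your flagging of the borderline case $r = r_0$ and the containment $\overline{B_{r_0}(\tilde L)} \subseteq B_r(\tilde L)$ is a welcome bit of care that the paper leaves implicit.
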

		\begin{proof}
			The proof is a direct application of the Kantorovich theorem,
			e.g.,~\cite[Thm 5.3.1]{DennSch:83}, along with
			the above lemmas and corollaries in this section.
		\end{proof}
		
		As mentioned previously in this section, the conditions required in \Cref{lem:posobjf} and \Cref{th:Kantorovich} are fulfilled
		with
\[r=10^{-3},\ \gamma=7.24\times 10^4,\ \bar f_L=10^3.\]
		Plugging them and \cref{eq:fL}, \cref{eq:gl}, \cref{eq:lmin} into \Cref{th:Kantorovich}, we have
		\begin{equation*}
			\begin{array}{ccl}
				&&1/212<\beta<1/210,\\
				&& \eta\le\|\nabla^2 f_L(\tilde L)^{-1}\|_2\|\nabla f_L(\tilde L)\|\le1/210\times10^{-7},\\
				&&\gamma_R=\beta \gamma<1/210\times7.24\times 10^4,\\
				&& \alpha=\gamma_R\eta<(1/210\times7.24\times 10^4)\times(1/210\times10^{-7})<1/2,\\
				&&r_0=\frac{(1-\sqrt{1-2\alpha})}{\beta\gamma}<\frac{1-\sqrt{1-2\times1/210^2\times7.24\times10^{-3}}}{1/212\times7.24\times 10^4}<r.
			\end{array}
		\end{equation*}
Combining these with  \cref{eq:gammas2} and \cref{eq:fL2} via \Cref{th:Kantorovich}, we conclude that $f_L$ has a \lngm in $B_{r}(\tilde{L})$.

In \Cref{Ex1} we plot the known centered global minimizer $\bar P$ and
the centered numerical found $\tilde P$ near which it has been proved
there exists a \lngm. It is interesting to observe that the $\tilde p_i\approx \bar p_i$ for all $i\neq i_0$, 
all except the one $\bar p_{i_0}$ with the biggest absolute value, i.e., $|\bar p_{i_0}|>|\bar p_{i}|$, for all $i\neq i_0$; 
the corresponding $\tilde p_{i_0}$ has the biggest absolute
value among all $\tilde  p_{i}$ for $i = 1, \dots, n=50$, and has an opposite sign to $\bar p_{i_0}$. 
\begin{figure}[H]
\begin{center}
\includegraphics[width=1.0\textwidth]{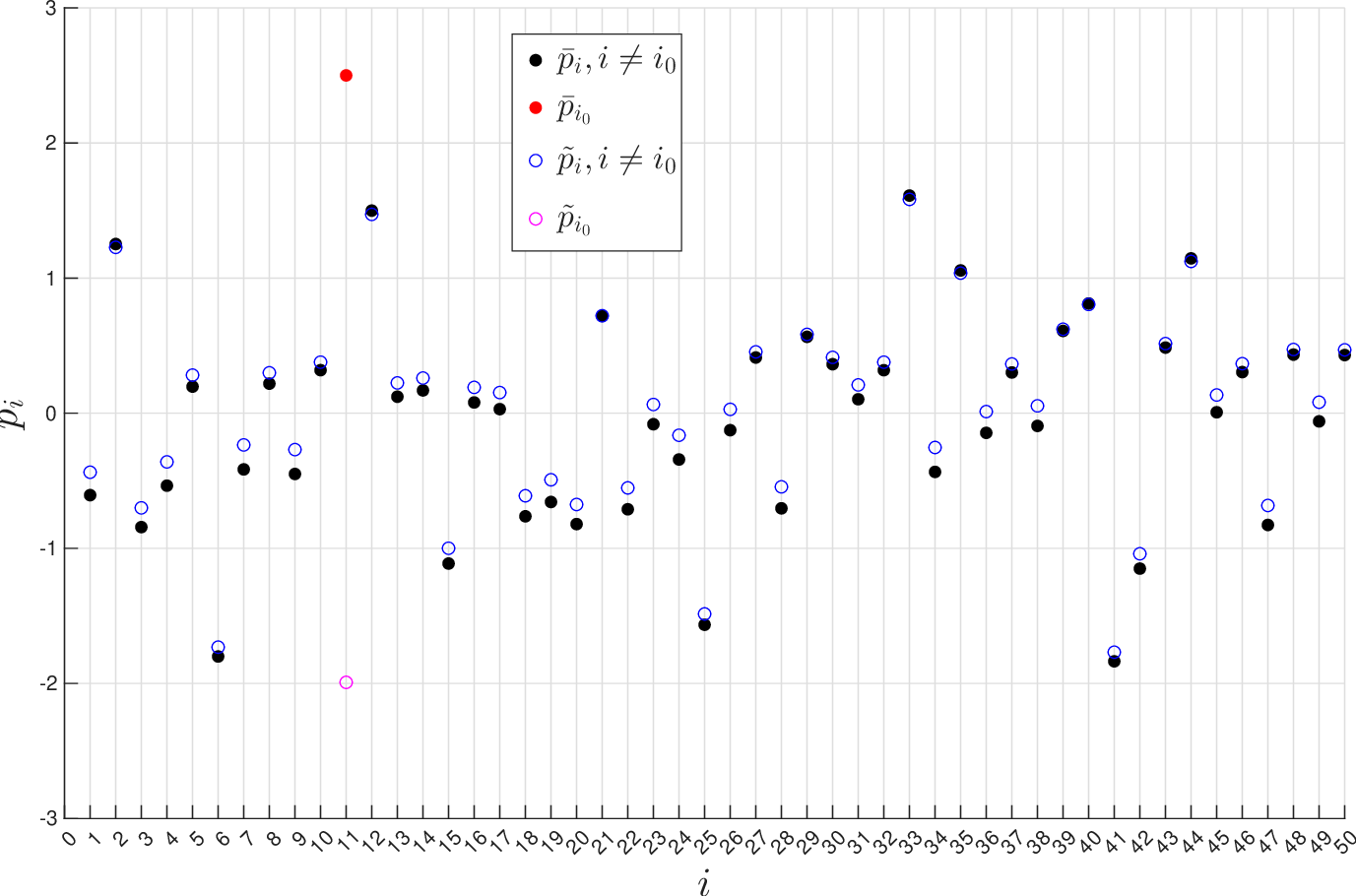}
\caption{values of \underline{global min} and
\underline{numerical near \lngmp}:
$\bar P, \tilde P\in\R^{50\times 1}$; in \Cref{ex:counterexample}.}
\label{Ex1}
\end{center}\end{figure}

Based on these observations, we generated examples with $d=1$ and
$n=100$. We randomly generated $\bar L$, then get centered $\bar P$ by
setting $\bar P=V\bar L$. Next, we find $\bar p_{i_0}$, which has the
biggest absolute value among all $\bar p_{i}$ for $i = 1, \dots, n$. We
define the starting point $\hat P$ by setting  $\hat p_i=\bar p_i$ for
$i\neq i_0$ and $\hat p_{i_0}=-\bar p_{i_0}$. Starting from $\hat L = V^T \hat P$,
the trust region method converges to a nonglobal second-order stationary
point of $f_L(L)$ with high frequency.

\subsubsection{Case $d=2$}
\label{sect:Kantd2}

As mentioned in \Cref{sect:mainprob} for the case of $d>1$, all local
minimizers of $f_L(L)$ are nonisolated, implying that the Hessian matrix, at any local minimizer of $f_L(L)$, is singular. 
We consider the model $f_\ell(\ell)$ for an example with $d=2$, and we analyze a configuration
$\tilde \ell\in \R^{t_\ell}$ satisfying equivalent conditions to
\cref{eq:configapproxlngm} but for $f_\ell$:
\begin{equation}
\label{eq:configapproxlngmell}
\begin{array}{rll}
\\ \text{Objective value}: & f_\ell(\tilde \ell)=f_\ell(\tilde \ell)
>\tilde f_\ell,
 & \text{for some (large)  } \tilde f_\ell >0;
\\ \text{Near stationarity}: &  \|\nabla f_\ell(\tilde \ell)\| <\tilde
g_\ell, &
\text{for
some (small)  } \tilde g_\ell> 0;
\\ \text{Local Convexity}: &  \lambda_{\min}(\nabla^2 f_\ell(\tilde \ell)) > \tilde
\lambda_\ell,  & \text{for some (large)  }  \tilde \lambda_\ell > 0.  
\end{array}
\end{equation}

We use Kantorovich Theorem to verify that this example has a strict local nonglobal minimizer $\ell_*$, where the first $2$ rows of $\LTriag( \ell_*)$ are linearly independent. According to \tred{\Cref{th:L*l,th:equi2}}, $\LTriag( \ell_*)$ is a local nonglobal minimizer of $f_L(L)$, and $V\LTriag( \ell_*)$ is a local nonglobal minimizer of  $f(P)$.
		
		\begin{example}\label{ex:counterexample2}
			An example with $n=100, d=2$ is given, with data
$\bar \ell, \tilde \ell\in\R^{197}$ presented,
see \Cref{footn:papercodelink}. 
 Matrix $\bar D$ is the distance matrix obtained from $\bar \ell$ by
			$$\bar D=\cK(V\LTriag(\bar \ell)\LTriag(\bar \ell)^TV^T).$$
			Thus, $\bar \ell$ is a global minimizer; 
			$\tilde \ell$ is a numerically convergence point obtained by a trust region method with random
initialization.
			The objective value is 
			\begin{equation}\label{eq:fell}
				f_\ell(\tilde \ell) > 9.99\times 10^3 ,
			\end{equation}
			the absolute and relative gradient norms  are 
			\begin{equation}\label{eq:gl2}
				\|\nabla f_\ell(\tilde \ell)\|<9.23\times 10^{-8},\ \frac{\|\nabla f_\ell(\tilde \ell)\|}{1+f_\ell(\tilde \ell)} 	< 9.24\times 10^{-12},
			\end{equation}
			and the least eigenvalue of the Hessian matrix is 
			\begin{equation}\label{eq:Hex2}8.58>\lambda_{\min} (\nabla^2 f_\ell(\tilde \ell) )>7.93.\end{equation}
		\end{example}
In the following \Cref{prop:Lipconstant2}, we verify that there exists a 
\lngm of $f_\ell(\ell)$. 
%The analysis process for $f_\ell(\ell)$ is similar to
%that for $f_L(L)$. 

\begin{cor} 
			\label{prop:Lipconstant2}
			Let $r>0, \tilde \ell\in \Rnod$ be given. If
			\begin{equation}\label{eq:cal_gamma}
				\gamma\ge24\sqrt{2}\left(\sum_{i,
j}\|(V\LTriag(\tilde \ell))[i,:]-(V\LTriag(\tilde
\ell))[j,:]\|_F+2n^{3/2}r\right),
			\end{equation}
			then $\gamma$ is a Lipschitz constant for the Hessian of $f_\ell$ in the radius-$r$
			neighborhood of $\tilde \ell$:
			\begin{equation*}
				\|\nabla^2 f_\ell(\hat \ell) -\nabla^2 f_\ell(\check \ell)\|_2\le \gamma \|\hat \ell-\check \ell\|, \quad \text{for\ all  }   \hat \ell, \check \ell\in B_r(\tilde\ell).
			\end{equation*}
			Moreover, 
			\begin{equation*}
				\lambda_{\min}(\nabla^2 f_\ell (\ell)) \ge \lambda_{\min}(\nabla^2f_\ell(\tilde \ell))- \gamma r, \quad \text{for\ all  }  \ell \in B_r(\tilde \ell).	\end{equation*}
		\end{cor} 
\begin{proof}
The results follow from the fact that,
according to \cref{eq:2optfl}, 
the Hessian matrix of $f_\ell$ at $\tilde\ell$
is a submatrix of the Hessian matrix of $f_L$  at $\LTriag(\tilde\ell)$.
The steps are similar to the proof of \Cref{prop:Lipconstant}.
\end{proof}

In \Cref{ex:counterexample2}, we have $$\sum_{i, j}\|(V\LTriag(\tilde \ell))[i,:]-(V\LTriag(\tilde \ell))[j,:]\|<1.803\times10^4.$$ Let $r=10^{-5}$. Then $\gamma=6.12\times10^5$ satisfies \cref{eq:cal_gamma}. According to \Cref{prop:Lipconstant2} and \cref{eq:Hex2}, we have 
\begin{equation}\label{eq:lambdamin}\lambda_{\min}(\nabla^2 f_\ell (\ell)) \ge \lambda_{\min}(\nabla^2f_\ell(\tilde \ell))- \gamma r>0, \quad \text{for\ all  }   \ell \in B_r(\tilde \ell).\end{equation}

We now continue to extend the results from~\Cref{sect:casedone} to this
$d=2$ case.
	\begin{cor}\label{le:fellb}
			Suppose that $f_\ell(\tilde \ell) > \bar f_\ell$ and that
			the Hessian $\nabla^2 f_\ell$ is uniformly positive definite in the $r$-ball
			around $\tilde \ell$: 
			\begin{equation*}
				\lambda_{\min}(\nabla^2 f_\ell(\ell)) > 0, \quad \text{for\ all  } \ell \in B_r(\tilde \ell).
			\end{equation*}
			Then, $f_\ell$ is positively uniformly bounded below in $B_r(\tilde \ell)$: 
			\[
			f_\ell(\ell) > \bar f_\ell > 0, 
			\quad \text{for\ all  } \| \ell-\tilde \ell\|\leq 
			\min\left\{r,\frac {f_\ell(\tilde \ell)-\bar f_\ell}{\|\nabla f_\ell(\tilde \ell)\|}\right\}.
			\]
		\end{cor}
		\begin{proof}
The results follow as in~\Cref{lem:posobjf}.
		\end{proof}

Let $\bar f_L=10^3$. By \cref{eq:fell} and \cref{eq:gl2}, we have $$\frac {f_\ell(\tilde \ell)-\bar f_\ell}{\|\nabla f_\ell(\tilde \ell)\|}>\frac{8.99\times 10^3}{9.23\times 10^{-8}}>r.$$ 
Thus, according to \Cref{le:fellb}, we have 
	\begin{equation}\label{eq:ell2}
		f_\ell(\ell) > \bar f_\ell > 0, \quad \text{for\ all  }   \ell \in B_r(\tilde \ell).
	\end{equation}	
		\begin{cor}\label{th:Kantorovich2}
			Let $\tilde \ell\in \R^{t_\ell}$ 
			be given and $r\in \Rpp$ be found such that 
			\begin{equation}\label{eq:Hpositive2}\nabla^2 f_\ell(\ell)\succ 0, \quad \text{for\ all  } \ell \in B_r(\tilde \ell),
			\end{equation} 
			and $\bar f_\ell$ satisfy 	$$f_\ell(\ell) > \bar f_\ell > 0, \quad \text{for\ all  }  \ell \in B_r(\tilde \ell).$$
			Let $\gamma$ be a Lipschitz constant for the Hessian of $f_\ell$ in the
			$r$-ball about $\tilde \ell$. Set 
			\[
			\beta := \|\nabla^2 f_\ell(\tilde \ell)^{-1}\|_2, \quad \text{and} \quad 
			\eta :=  \|\nabla^2 f_\ell(\tilde \ell)^{-1}\nabla f_\ell(\tilde \ell)\|.
			\]
			Define $\gamma_R = \beta \gamma$ and $\alpha = \gamma_R\eta$.
			If $\alpha \leq \frac 12$ and $r\geq 
			r_0:= \frac {1-\sqrt{1-2\alpha}}{\beta\gamma}$, then the sequence
			$\ell_0=\tilde \ell, \ell_1, \ell_2, \ldots$, produced by
			\[
			\ell_{k+1} = \ell_k - \nabla^2f_{\ell}(\ell_k)^{-1} \nabla f_\ell(\ell_k), \, k = 0,1,\ldots,
			\]
			is well defined and converges to $\ell_*$, a unique root of 
			the gradient $\nabla f_\ell$ in
			the closure of $B_{r_0}(\tilde \ell)$. If $\alpha <\frac 12$, then $\ell_*$ is 
			the unique zero of $\nabla f_\ell$ in
			the closure of $B_{r_1}(\tilde \ell)$, 
			$$r_1:= \min\left\{r,\frac {1+\sqrt{1-2\alpha}}{\beta\gamma}\right\}$$ and
			\[
			\|\ell_k-\ell_*\| \leq (2\alpha)^{2k} \frac \eta \alpha, \quad k = 0,1,\ldots.
			\]
			Moreover, $\ell_*$ is a \lngm.
		\end{cor}
		\begin{proof}
As in \Cref{th:Kantorovich}, the proof is a direct application of the
Kantorovich theorem.
		\end{proof}
Plugging \cref{eq:fell}, \cref{eq:gl2},  \cref{eq:Hex2}, and 
\[r=10^{-5},\ \gamma=6.12\times10^5,\ \bar f_L=10^3,
\]
into \Cref{th:Kantorovich2}, we have
		\begin{equation*}
			\begin{array}{ccl}
				&&1/7.93>\beta>1/8.58,\\
				&& \eta\le\|\nabla^2 f_\ell(\tilde \ell)^{-1}\|_2\|\nabla f_\ell(\tilde \ell)\|<1/ 7.93\times 9.23\times10^{-8},\\
				&&\gamma_R=\beta\gamma<1/7.93\times6.12\times10^5,\\
				&& \alpha=\gamma_R\eta<(1/7.93\times6.12\times10^5)\times(1/ 7.93\times 9.23\times10^{-8})<1/2,\\
				&&r_0=\frac{1-\sqrt{1-2\alpha}}{\beta\gamma}<\frac{1-\sqrt{1-2\times1/7.93^2\times6.12\times 9.23\times10^{-3}}}{1/8.58\times6.12\times10^5}<r.
			\end{array}
		\end{equation*}
Combining this with \cref{eq:lambdamin} and \cref{eq:ell2}, we conclude
from \Cref{th:Kantorovich2} that there exists a \lngm in $B_{r}(\tilde\ell)$.

{In \Cref{Ex2} we plot: the points $\bar p_i\in \mathbb{R}^2,\
i=1, \dots, n=100$, of the global configuration $\bar P\in \mathbb{R}^{100\times
2}$, and the corresponding points $\tilde p_i\in \mathbb{R}^2,\ i=1, \dots,
n$, of the numerical configuration $\tilde P\in \mathbb{R}^{100\times 2}$
near a proven \lngmp.
We note that $\bar p_i\approx \tilde p_i, \forall i=1, \dots, n$,
except for two indices ${i_0}$ and ${i_1}$; while $\tilde p_{i_0}$ and
$\tilde p_{i_1}$ appear to be the reflections of $\bar p_{i_0}$ and $\bar p_{i_1}$.  
This interesting observation is similar to what happens in
\Cref{ex:counterexample} as seen in \Cref{Ex1}.} 
\begin{figure}[H]
	\begin{center}
		\includegraphics[width=1.0\textwidth]{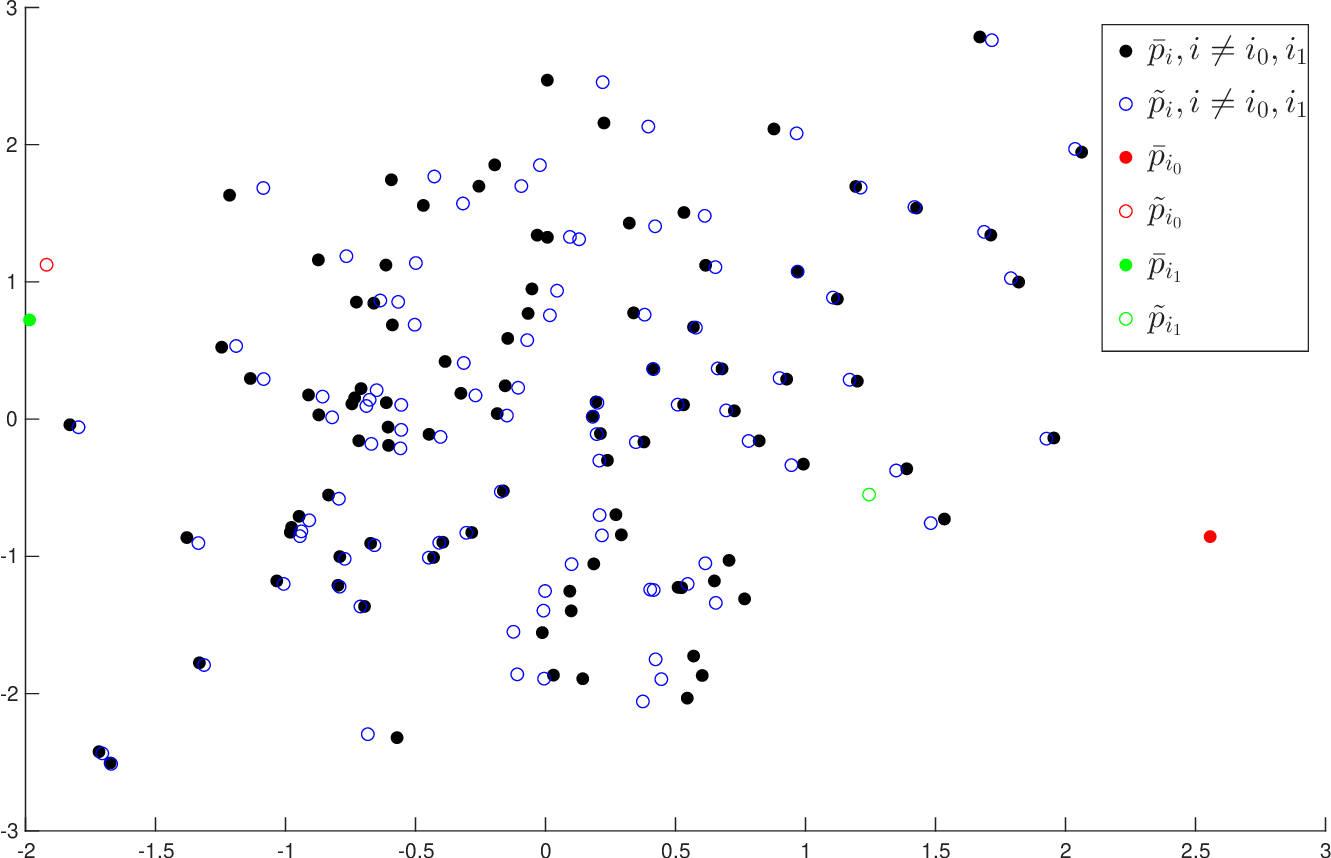}
\caption{coordinates of \underline{global min} and
\underline{numerical near proven
\lngmp}, resp.: 
$\bar P, \tilde P\in\R^{100\times 2}$; in \Cref{ex:counterexample2}.}
	\label{Ex2}
\end{center}\end{figure}
\section{Conclusion}\label{sec:final}
		In this paper, we addressed the nonconvex optimization problem arising from the exact recovery of points from a given \EDMp. 
		Our investigation led to significant advancements in
understanding the conditions under which the smooth stress function (as
known in the MDS literature) in \EDM problems has a \lngm. We established
that for the smooth stress function, which is a quartic in $P \in
\mathbb{R}^{n \times d}$, all second-order stationary points are global
minimizers when  $ n \leq d + 1 $. For $ n > d + 1 $, we not only
identified \lngm through numerical methods, but also used
Kantorovich's theorem to provide rigorous
analytical proofs confirming their existence.
{Moreover, based on the special patterns in those \lngmp s, we were able to build the analytical \Cref{ex:exactex}.}
	Our methodology includes two reduction techniques based on translation and rotation invariance. These reductions are necessary for the application of Kantorovich's theorem. 
		
	The findings of this research resolve a longstanding open question regarding the existence of \lngmp s in the context of MDS. Additionally, our research highlights the importance of second-order methods for minimizing the smooth stress function.
		\label{pape:conclusion}
%		In our future research plans
%we intend to devise analytical examples with \lngm for
%$d < n - 1$ in order to complement the numerical 
%and constructive examples reported here.

{For the future we plan to explore the possibility of further
characterizing the properties of \lngmp s, e.g.,~with respect to
embedding dimensions and ranks.}
%as the two examples presented in \Cref{Ex1} and \Cref{Ex2} suggest special patterns in its structure.} 
Moreover, our goal is to study conditions for the existence of {\bf
lngm} when $\bar D$ has inexact and missing entries, as this is closer to
the so called \EDM Completion Problem, 
e.g.,~\cite{KrislockWolk:10,MR3246296}. This work is a
first step in this direction as here we assume $\bar D$ is complete and a
true \EDM. 
%The next step is to analyze the case of a complete $\bar D$
%that is not \EDM. 
%In some of our proofs we strongly used the fact that
%$f(P)$ has a global minimizer $\bar P$ such that $f(\bar P)=0$.  
%What happens in the absence of such assumption is the subject of future research.

\section*{Acknowledgments}
We thank the anonymous referees for the careful reading and meaningful
suggestions that helped improve the paper. 
We also thank Jos\'{e} Mario Mart\'{i}nez, Emeritus Professor of the 
University of Campinas, for fruitful discussions. 
The work of M.S. was supported by the international joint doctoral education fund of Beihang University.
The work of D.G. was supported in part by CNPq (Conselho Nacional de Desenvolvimento Cient\'ifico e Tecnol\'{o}gico) Grant 305213/2021-0 and by FAPESC (Funda\c{c}\~ao de Amparo a Pesquisa e Inova\c{c}\~ao do Estado de Santa Catarina) Grant 2024TR002238. 
W.J. and H.W. research is supported by the Natural Sciences and
Engineering Research Council of Canada. A.M. was partially supported by the ANR project EVARISTE (ANR-24-CE23-1621).
C.L. is supported in part by FAPESP (Grant 2023/08706-1) and CNPq (Grants 305227/2022-0 and 404616/2024-0).

\thispagestyle{empty}

\cleardoublepage
\label{ind:index}
\printindex
\addcontentsline{toc}{section}{Index}

\addcontentsline{toc}{section}{Bibliography}

\end{document}